\newtheorem{theorem}{Theorem}[section]
\newtheorem{lemma}[theorem]{Lemma}
\newtheorem{proposition}[theorem]{Proposition}
\newtheorem{corollary}[theorem]{Corollary}
\newtheorem*{sublemma}{Sublemma}
\newtheorem{THM}{Theorem}
\newtheorem*{T1}{Theorem 1}
\newtheorem*{T2}{Theorem 2}
\theoremstyle{definition}
\newtheorem{remark}[theorem]{Remark}
\newtheorem{definition}[theorem]{Definition}
\newtheorem*{example}{Example}
\def\beq{\begin{eqnarray*}}
\def\eeq{\end{eqnarray*}}
\def\bQ{\mathbb{Q}}
\def\Q{\mathbb{Q}}
\def\bR{\mathbb{R}}
\def\R{\mathbb{R}}
\def\bZ{\mathbb{Z}}
\def\Z{\mathbb{Z}}
\def\bN{\mathbb{N}}
\def\N{\mathbb{N}}
\def\incl{\hookrightarrow}
\def\to{\rightarrow}
\def\tH{\widetilde{H}}
\def\eps{\varepsilon}
\def\dim{\mathrm{dim}\>}
\def\id{\mathrm{id}}
\def\x{\times}
\def\d{\partial}
\def\phi{\varphi}
\def\Hom{\mathrm{Hom}}
\def\Emb{\mathrm{Emb}}
\def\P{\mathcal{P}}
\def\L{\langle L \rangle}
\def\K{\mathcal{K}}
\def\E{E^\nu}
\def\Th{\mathrm{Th}}
\def\Bl{\mathrm{Bl}}
\def\map{\mathrm{map}}
\def\inte{\mathrm{int}}
\def\C{\mathcal{C}}
\title{A homotopy-theoretic view of Bott-Taubes integrals and knot spaces}
\author{Robin Koytcheff}
\date{\today}
\begin{document}
\maketitle


\begin{abstract}
We construct cohomology classes in the space of knots by considering a bundle over this space and ``integrating along the fiber" classes coming from the cohomology of configuration spaces using a Pontrjagin-Thom construction.  The bundle we consider is essentially the one considered by Bott and Taubes \cite{Bott-Taubes}, who integrated differential forms along the fiber to get knot invariants.  By doing this ``integration" homotopy-theoretically, we are able to produce \emph{integral} cohomology classes.  We then show how this integration is compatible with the homology operations on the space of long knots, as studied by Budney and Cohen \cite{Budney-Cohen}.  In particular we derive a product formula for evaluations of cohomology classes on homology classes, with respect to connect-sum of knots. 
\end{abstract}

\section{Introduction}
The space of knots, $\Emb(S^1, \R^3)$, is the space of smooth embeddings of $S^1$ into $\R^3$, and its connected components correspond to isotopy classes of knots.  Thus elements of $H^0 (\Emb(S^1, \R^3))=\Hom(\Z[\pi_0 \Emb(S^1, \R^3)], \Z)$ correspond to knot invariants.  More generally, classes in the cohomology of the space of knots can be thought of as naturally extending knot invariants, a subject of classical study.  The main contribution of this paper is to take a differential forms construction of cohomology classes in the knot space and recast it purely in terms of algebraic topology; use this to construct families of integral cohomology classes; and then show that these classes satisfy explicit product formulae with respect to connect-sums of knots.

In \cite{Bott-Taubes}, Bott and Taubes constructed knot invariants by considering a bundle over $\Emb(S^1,\R^3)$.  The fiber of this bundle is a compactification of a configuration space of $q+t$ points in $\R^3$, $q$ of which lie on the knot.
Bott and Taubes considered differential forms coming from the cohomology of configuration spaces, integrated them along the fiber of the bundle, and showed that the result is a zero-dimensional closed form, which thus represents a knot invariant.  This result concerned one particular knot invariant previously found through Chern-Simons theory.  However, the framework they set up was used by Thurston to construct a whole class of knot invariants for knots in $\R^3$ \cite{Thurston, VolicBT}, and by Cattaneo, Cotta-Ramusino, and Longoni to construct cohomology classes in $\Emb(S^1,\R^n)$ \cite{Cattaneo}.  The knot invariants constructed in \cite{Thurston} are Vassiliev invariants (i.e., finite type), and the graph cochain complex used in \cite{Cattaneo} to construct the cohomology classes is known to be quasi-isomorphic to the $E^1$ term of the Vassiliev spectral sequence.

In a rather different approach to studying knot spaces, Budney obtained results on the homotopy type and homology of knot spaces.  Let $\Emb(\R, \R^3)$ denote the space of long knots, i.e., the space of embeddings of $\R$ into $\R^3$ which agree with a standard embedding of $\R$ outisde of the unit interval.  Budney constructed a little 2-cubes operad action on a space which we call $\K$, which is homotopy equivalent to $\Emb(\R, \R^3)$, the space of long knots in $\R^3$. (He also constructed such an action on the space of framed long knots in $\R^n$.)  This can be viewed as a lifting of the connect-sum operation on isotopy classes of knots to a space-level operation on the space of knots; the fact that the little 2-cubes operad parametrizes it reflects its homotopy-commutativity.  Budney used this together with JSJ-decompositions of 3-manifolds and techniques of Hatcher's to further show that $\Emb(\R, \R^3)$ is the free 2-cubes object on the space of prime long knots \cite{Budney}.  Combining this with work of Fred Cohen \cite{FCohen} gave a computation of the homology of the space of long knots \cite{Budney-Cohen}.

In this paper, we consider a bundle $E_{q,t}$ similar to that of Bott and Taubes, but over the space $\K$ of ``fat long knots", as considered by Budney, in order to exploit the 2-cubes action as best as we can:
\[
 \xymatrix{
F_{q,t} \ar[r] & E_{q,t}\ar[d] \\
 & \K}
\]
The fiber $F_{q,t}$ is a compactification of the configuration space of $q+t$ points in $\R^3$, $q$ of which lie on the knot.  Let $C_q(\R^3)$ denote the compactified configuration space of $q$ points in $\R^3$.  We have a map $ev: E_{q,t}\to C_{q+t}(\R^3)$ such that the composition
\[
 \xymatrix{
F_{q,t} \ar[r] & E_{q,t} \ar[r] & C_{q+t}(\R^3)}
\]
is the obvious inclusion.  We carry out ``integration along the fiber", but by methods of algebraic topology.  Embedding the total space $E=E_{q,t}$ by a map $e_N$ into a Euclidean space of dimension $N$ and taking a Thom collapse (or pre-transfer) map \emph{roughly} gives a map
\[
\tau: \Sigma^N \K_+ \to E^{\nu_N}
\]
from the $N$-fold suspension of the base space $\K$ (union a disjoint basepoint) to the Thom space  of the normal bundle $\nu_N$ of $e_N$.  In cohomology this gives a map corresponding to integration along the fiber.  
However, the fibers in this case happen to be manifolds with corners, so we take some care to ensure that the embedding $e_N$ is \emph{neat}, i.e., that it preserves the corner structure.  In addition, we must quotient by the boundary to get a map as above from $\Sigma^N\K$, and the map we get is actually
\[
\tau: \Sigma^N \K_+ \to E^{\nu_N}/(\d E^{\nu_N}).
\]
By letting $N$ in $e_N$ approach $\infty$, we then get a map from the suspension \emph{spectrum} of $\K$ to the Thom \emph{spectrum} of the normal bundle to the total space, which induces in cohomology a map similar to the Bott-Taubes integration along the fiber.  It is worth noting that in the case of bona fide integration along the fiber, there is only a map on the level of forms, not cohomology, because of the presence of boundary.  Our method for constructing cohomology classes works equally well when $\K$ is replaced by the space of long knots in $\R^n$ or by $\Emb(S^1, \R^n)$, the space of closed knots in $\R^n$, for any $n\geq 3$.

Once we have set up the map $\tau$, we want to see how it behaves with respect to operations induced in homology by connect-sum, and more generally Budney's 2-cubes action on the long knot space.  We consider a space-level connect-sum $\mu:\K\x \K \to \K$ and define a multiplication 
\[\mu_C: C_q(\R^3)/\d C_q(\R^3) \x C_q(\R^3)/\d C_q(\R^3) \to C_{q+r}(\R^3)/\d C_{q+r}(\R^3)
\]
on the compactified configuration spaces modulo their boundaries.  This allows us to extend $\mu$ to a multiplication
\[ \mu_E: E_{q,t}/\d E_{q,t} \x E_{r,s} /\d E_{r,s} \to E_{q+r,t+s}/\d E_{q+r,t+s} 
\]
on the total space of our bundle modulo its boundary.  Careful scrutiny then shows that we can make the Thom collapse maps $\tau_{q,t}$ commute with the multiplications on the knot space and the total space.  To summarize, the first main result of the paper is
\begin{THM}
\label{mapofringspectra}
The total spaces $E_{q,t}$ of the bundles over the knot space have a multiplication $E_{q,t}/\d E_{q,t}\x E_{r,s}/\d E_{r,s}\to E_{q+r,t+s}/\d E_{q+r,t+s}$ which makes the wedge of Thom spectra $\bigvee_{q,t \in \bN} E^\nu_{q,t}/\d E^\nu_{q,t}$ into a ring spectrum.  (The precise definition of this spectrum is given in section \ref{thomringspectrumsection}.)  The Thom collapse maps for various $q,t$ induce a map of ring spectra
\[
\vee_{q,t} \tau_{q,t}: \bigvee_{q,t} \Sigma^\infty \K_+ \to \bigvee_{q,t} E^\nu_{q,t}/\d E^\nu_{q,t}
\]
where the multiplication in $\bigvee \Sigma^\infty \K_+$ comes from the space-level connect-sum.  Using the Thom isomorphism, this induces an ``integration along the fiber'' map in cohomology with arbitrary coefficients, producing classes in $H^*\K$.
\end{THM}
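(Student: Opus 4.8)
The plan is to build the statement in four stages: first construct the multiplications $\mu$ on $\K$, $\mu_C$ on compactified configuration spaces modulo their boundaries, and $\mu_E$ on the total spaces $E_{q,t}$ modulo their boundaries; second, fix a compatible system of neat embeddings of the $E_{q,t}$ and form the Thom collapse maps $\tau_{q,t}$; third, assemble $\bigvee_{q,t} E^\nu_{q,t}/\d E^\nu_{q,t}$ into a ring spectrum, the structure to be spelled out in Section~\ref{thomringspectrumsection}; and fourth, check that $\bigvee_{q,t}\tau_{q,t}$ respects these multiplications and then extract the cohomological statement through the Thom isomorphism.

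For the first stage I would start from Budney's little $2$-cubes action on $\K$ and extract the binary operation $\mu\colon\K\x\K\to\K$ that rescales two fat long knots into two disjoint subintervals and concatenates them. The essential feature is that $\mu$ carries, for each of its two inputs, an affine rescaling embedding of the ambient $\R^3$ into the target $\R^3$, with disjoint images. This is exactly what makes $\mu_C\colon C_q(\R^3)/\d C_q(\R^3)\x C_r(\R^3)/\d C_r(\R^3)\to C_{q+r}(\R^3)/\d C_{q+r}(\R^3)$ well defined: a point of the first configuration and a point of the second lie in the two disjoint rescaled copies of $\R^3$ and hence cannot collide, while a configuration whose combined image degenerates toward the join region or toward infinity already lies in the boundary that has been collapsed. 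One then checks associativity and the unit (the empty configuration), and that $\mu_E$ comes from a fiberwise map over $\mu$ sending the relevant boundary strata into $\d E_{q+r,t+s}$ --- so that it descends to the stated quotients --- and that it intertwines evaluation, $ev\circ\mu_E = \mu_C\circ(ev\x ev)$; this amounts to tracking how the boundary and corner strata of $E_{q+r,t+s}$ restrict along $\mu_E$.

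For the second and third stages I would choose, compatibly in $q$ and $t$, neat embeddings $e_N\colon E_{q,t}\incl\R^N$ respecting the manifold-with-corners structure (as motivated in the introduction), arranged so that the embedding used for $E_{q+r,t+s}$ restricts along $\mu_E$ to one built from the embeddings of the two factors, whence the associated fiberwise normal bundles add under Whitney sum. The Pontrjagin--Thom construction then gives $\tau_{q,t}\colon\Sigma^N\K_+\to E^{\nu_N}_{q,t}/\d E^{\nu_N}_{q,t}$, and letting $N\to\infty$ yields the spectrum map $\tau_{q,t}\colon\Sigma^\infty\K_+\to E^\nu_{q,t}/\d E^\nu_{q,t}$. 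Using $\mu_E$ together with the additivity of the normal bundles one obtains pairings $E^\nu_{q,t}/\d E^\nu_{q,t}\wedge E^\nu_{r,s}/\d E^\nu_{r,s}\to E^\nu_{q+r,t+s}/\d E^\nu_{q+r,t+s}$, and verifying associativity together with the unit axiom --- carried by the $(q,t)=(0,0)$ summand, where $E_{0,0}=\K$, the Thom spectrum is $\Sigma^\infty\K_+$, and the unit is picked out by the standard embedding of $\R$, i.e.\ the unknot --- makes $\bigvee_{q,t}E^\nu_{q,t}/\d E^\nu_{q,t}$ a ring spectrum; the source $\bigvee_{q,t}\Sigma^\infty\K_+$ is a ring spectrum under the connect-sum $\mu$.

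I expect the main obstacle to be the commutativity, up to homotopy of spectra, of the square comparing $\tau_{q+r,t+s}$ with the smash product $\tau_{q,t}\wedge\tau_{r,s}$ --- the ``careful scrutiny'' alluded to above. Concretely, one must show that the Thom collapse determined by $\mu_E$, computed with the compatible neat embeddings, agrees with the smash of the two individual Thom collapses, which requires tracking tubular neighborhoods through the affine rescalings defining $\mu$ and $\mu_E$, matching up the corner strata under the quotients by $\d E^\nu$, and arranging that the comparison homotopies be natural enough to stabilize. Granting this, passing to cohomology, the Thom isomorphism $\widetilde{H}^*(E^\nu_{q,t}/\d E^\nu_{q,t})\cong H^{*+\dim F_{q,t}}(E_{q,t},\d E_{q,t})$ turns $\bigvee_{q,t}\tau_{q,t}$ into an integration-along-the-fiber map $H^*(E_{q,t},\d E_{q,t})\to H^{*-\dim F_{q,t}}(\K)$ with arbitrary coefficients; composing with the pullback $ev^*$ along the evaluation map then feeds it classes from the (boundary-relative) cohomology of the compactified configuration spaces, and the ring-spectrum statement records exactly the multiplicativity of this integration with respect to connect-sum.
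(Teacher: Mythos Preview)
Your proposal is correct and follows essentially the same route as the paper: define compatible multiplications $\mu$, $\mu_C$, $\mu_E$ (the paper does this via a nonsymmetric little-intervals action, which packages homotopy-associativity), arrange neat embeddings so that the normal bundle to the product embedding is the pullback of the normal bundle to $E_{q+r,t+s}$, deduce that the Thom collapse maps commute with the multiplications, and then assemble the wedge into a ring spectrum with some care about indexing (only finitely many $E_{q,t}$ embed for fixed $N$). One small slip: the neat embedding is $E_{q,t}\hookrightarrow \K\times\bR^{\langle L\rangle,N}$, not into $\bR^N$ alone, since $E_{q,t}$ is infinite-dimensional; the paper's compatible-embedding argument (its Sublemma) is accordingly carried out at the level of the configuration spaces $C_{q+t}$ rather than the $E_{q,t}$ themselves.
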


The compatibility of the multiplication on $\bigvee_{q,t} E^\nu_{q,t}/\d E^\nu_{q,t}$ with connect-sum on $\K$ allows us to derive the following product formula, which is the second main result of the paper:

\begin{THM}[Product Formula]
\label{productformula}
Let $\beta \in H^*(C_q(\R^3)/\d C_q(\R^3))$ and $a_1,a_2 \in H_*\K$.  Let $\theta_i$ and $\eta_i$ be classes in $H^*(\coprod_q C_q(\R^3) /\d C_q(\R^3))$ such that $\mu_C^* \beta =\sum_i \theta_i \otimes \eta_i$.  Then
\[
\langle \tau^* ev^* \beta, \> \mu_*(a_1 \otimes a_2)\rangle = \sum_i \langle \tau^* ev^* \theta_i,\> a_1\rangle \cdot 
\langle \tau^* ev^* \eta_i, a_2 \rangle,
\]
where the cohomology can be taken with coefficients in any ring.
\end{THM}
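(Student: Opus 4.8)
The plan is to deduce the product formula from Theorem~\ref{mapofringspectra} by a diagram chase, the only genuinely new point being the identification of what ``map of ring spectra'' gives on cohomology. Concretely, I would first establish the single cohomological identity
\[
\mu^*\bigl(\tau^*ev^*\beta\bigr)=\sum_i\bigl(\tau^*ev^*\theta_i\bigr)\otimes\bigl(\tau^*ev^*\eta_i\bigr)\in H^*(\K)\otimes H^*(\K)\subseteq H^*(\K\x\K),
\]
and then read off the product formula by pairing against $a_1\otimes a_2$ and invoking naturality of the Kronecker pairing, $\langle\mu^*x,\,y\rangle=\langle x,\,\mu_*y\rangle$.

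To obtain the displayed identity I would chain two compatibilities. First, the evaluation map is constructed so that $ev\circ\mu_E=\mu_C\circ(ev\x ev)$ as maps of pairs $(E,\d E)\to(C,\d C)$ --- this is essentially how $\mu_E$ is defined --- so, since pullback commutes with cross products, $\mu_E^*\circ ev^*=(ev^*\otimes ev^*)\circ\mu_C^*$; applying this to $\beta$ and using the hypothesis $\mu_C^*\beta=\sum_i\theta_i\otimes\eta_i$ gives $\mu_E^*(ev^*\beta)=\sum_i ev^*\theta_i\otimes ev^*\eta_i$. Second, Theorem~\ref{mapofringspectra} says $\bigvee_{q,t}\tau_{q,t}$ is a map of ring spectra, the source multiplication being the one induced by the connect-sum $\mu$; passing to cohomology and using the Thom isomorphisms to identify $H^*\bigl(\bigvee E^\nu_{q,t}/\d E^\nu_{q,t}\bigr)$ with a shift of $H^*\bigl(\coprod(E_{q,t},\d E_{q,t})\bigr)$ --- under which the ring-spectrum product corresponds to the coproduct $\mu_E^*$ --- a map of ring spectra becomes a map of coalgebras, i.e.\ $\tau^*$ sends any $\mu_E^*$-decomposition $\mu_E^*w=\sum_j w_j'\otimes w_j''$ to $\mu^*(\tau^*w)=\sum_j\tau^*w_j'\otimes\tau^*w_j''$. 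Taking $w=ev^*\beta$ and combining with the first step yields the identity above.

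With that identity in hand the computation is routine: expanding
\[
\langle\tau^*ev^*\beta,\ \mu_*(a_1\otimes a_2)\rangle=\langle\mu^*\tau^*ev^*\beta,\ a_1\otimes a_2\rangle=\sum_i\bigl\langle(\tau^*ev^*\theta_i)\otimes(\tau^*ev^*\eta_i),\ a_1\otimes a_2\bigr\rangle
\]
and evaluating each tensor-product pairing gives $\sum_i\langle\tau^*ev^*\theta_i,\,a_1\rangle\cdot\langle\tau^*ev^*\eta_i,\,a_2\rangle$, up to Koszul signs $(-1)^{|\eta_i|\,|a_1|}$ which I would check are absorbed by the orientation conventions fixed when defining the maps $\tau_{q,t}$, accounting for the sign-free statement. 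I expect the main obstacle to lie in the Thom-isomorphism step of the previous paragraph: one must verify that the Thom classes of the normal bundles of $E_{q,t}$, $E_{r,s}$ and $E_{q+r,t+s}$ are compatible under $\mu_E$ --- equivalently, that $\mu_E$ pulls the Thom class over $E_{q+r,t+s}$ back to the external product of the Thom classes over the two factors --- and that passing to the quotients by $\d E$ and $\d C$ disturbs none of these identifications. This is governed by the neat-embedding and normal-bundle bookkeeping already set up to construct $\tau$; granting it, the remainder is the diagram chase above, which I would then write out in full.
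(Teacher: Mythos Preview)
Your proposal is correct and follows essentially the same route as the paper: the paper packages your two compatibilities into a single commutative diagram (four rows: $C/\d$, $E/\d$, Thom spaces, suspensions of $\K$) whose top square is your $ev\circ\mu_E=\mu_C\circ(ev\times ev)$, whose middle square is naturality of the Thom isomorphism, and whose bottom square is the ring-spectrum statement of Theorem~\ref{mapofringspectra}, then pairs with $a_1\otimes a_2$ exactly as you do. The Thom-class compatibility you flag as the main obstacle is precisely Lemma~\ref{normalpullback} (the left-hand normal bundle is the pullback of the right-hand one), which is what makes the middle square commute; the paper does not address the Koszul signs you raise, so your care there is, if anything, an improvement.
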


Moreover, the coproduct $\mu_C^*$ is computed in Lemma \ref{coprodCq} as the dual to an easily described product.  Thus for a particular $\beta$ we can determine the $\theta_i$ and $\eta_i$, and the above theorem will give us an explicit formula in terms of generators of $H^*(\coprod_q C_q(\R^3) /\d C_q(\R^3))$.

At this point, we can almost determine the evaluation of such $\alpha$ on any knot homology class from its evaluations on prime knot homology classes.  In fact, by Budney and Cohen's computation of the homology of the knot space, all that remains is to perform a similar calculation to the one above, but with the multiplication $\mu_E$ on $E/\d E$ replaced by a ``bracket" operation
\[
\C_2(2)\x E/\d E \x E/\d E \to E/\d E
\]
coming from a lift of the little 2-cubes action on $\K$ to $E/\d E$.  However, we will prove in Proposition \ref{no2cubeslift} that any multiplication on $\bigvee_{q,t} E_{q,t}/\d E_{q,t}$ compatible with the multiplication we define on $\bigvee_{q,t} C_{q+t}/\d C_{q+t}$ and the map 
\[
 ev: \bigvee_{q,t} E_{q,t}/\d E_{q,t} \to \bigvee_{q,t} C_{q+t}/\d C_{q+t}
\]
does not extend to a little 2-cubes action.  Now suppose more generally that one is given \emph{any} little 2-cubes actions on $\bigvee_{q,t} E_{q,t}/\d E_{q,t}$ and $\bigvee_{q,t} C_{q+t}/\d C_{q+t}$ compatible with the space-level connect-sum on $\K$ and the maps $ev$ and $\tau$.  We show in Proposition \ref{bracketprop} that the evaluation of a class $\tau^* ev^* \beta$ (as in Theorem \ref{productformula}) on the bracket of two classes in $H^*\K$ must be 0.

We note that even though we focus on the space of long knots in $\R^3$ because we study Budney's 2-cubes action on that space, Theorems \ref{mapofringspectra} and \ref{productformula} remain true when $\K$ is replaced by the space of long knots in $\R^n$ for any $n\geq 3$.

\subsection{Organization of the paper}
The rest of the paper is organized as follows.  Section \ref{background} describes the background on configuration space integrals from \cite{Bott-Taubes} and on homology of knot spaces from \cite{Budney} and \cite{Budney-Cohen}.  At the end of this section we define the bundle over the knot space that we will study.   Section \ref{xfer} focuses on the details of constructing the Thom collapse map, i.e., the ``integration along the fiber''.  This includes a review of a categorical approach to manifolds with corners, which we use to retain as much of the corner structure as possible.  In the first half of section \ref{mult}, we complete the proof of Theorem \ref{mapofringspectra}.  We do so by defining a multiplication on configuration spaces and hence on the total space of our bundle and then proving some lemmas about the compatibility of the multiplication with the Thom collapse maps.  In the second half of that section we prove Theorem \ref{productformula} by first examining the multiplication on configuration spaces in homology.  We then conclude with section \ref{bracket}, which contains two propositions related to the failure of the bracket operation to lift to the total space of our bundle.

\subsection{Acknowledgments}
The content of this paper will appear as part of the author's Stanford University Ph.D. thesis under the direction of Ralph Cohen.  The author would like to express deep gratitude to Ralph Cohen whose ideas, enthusiasm, and advice were indispensable for the completion of this article.  The author would also like to thank Nathan Habegger, Pascal Lambrechts, Paolo Salvatore, Dev Sinha, and Ismar Voli\'{c} for enlightening conversations relating to the subject matter of this paper.

\tableofcontents

\section{Background}
\label{background}

\subsection{The Bott-Taubes construction}
\label{BTsection}

Bott and Taubes constructed knot invariants by integrating differential forms along the fiber of a certain bundle over the knot space.  To describe this bundle, we first need to discuss configuration spaces.  For any space $X$, the ``open" (i.e., uncompactified) configuration space $C_q^0(X)$ of $q$ ordered points in $X$ is defined as the $q$-fold product minus the fat diagonal, i.e., 
\[C_q^0(X):=\{ (x_1,...,x_q)\in X^q | x_i\neq x_j \> \forall \> i\neq j\}.
\]
An embedding $X\incl Y$ induces a map of configuration spaces $C_q^0(X) \to C_q^0(Y)$.  

For a compact manifold $M$, the compactified configuration space $C_q(M)$ is defined as the closure of the image of the obvious embedding 
\begin{equation}
C_q^0(M) \incl M^q \x \prod_{\substack{S\subset \{1,...,q\}\\ |S|\geq 2}} \mathrm{Bl}(M^S, \Delta_S) \label{cptdefn}
\end{equation}
Here $M^S$ is the space maps from $S$ to $M$, a finite product of $M$'s, and $\Delta_S$ is the (thin) diagonal in $M^S$. $\Bl(X,Y)$ denotes the differential-geometric blowup of $X$ along $Y$, i.e., replace $Y$ by the sphere bundle of its normal bundle in $X$; alternatively, we can think of it as the complement of an open tubular neighborhood of $Y$ in $X$.  This compactification was first developed by Axelrod and Singer \cite{Axelrod-Singer}, who adapted work of Fulton and MacPherson \cite{Fulton-MacPherson} from the algebro-geometric setting to the differential-geometric setting.  The compactification leaves the homotopy type unchanged, so in particular, when we discuss homology of configuration spaces, we do not need to distinguish between the compactifications and their interiors.

Since $\R^n$ is noncompact, we must define $C_q(\R^n)$ as the pullback in the square below,
\[
\xymatrix{
C_q(\R^n) \ar[d] \ar[r] & C_{q+1}(S^n) \ar[d] \\
\{\infty\} \ar[r] & S^n }
\]
where the right-hand vertical map is projection onto the $(q+1)^{\textrm{th}}$ point.  That is, $C_q(\R^n)$ is the subspace of $C_{q+1}(S^n)$ where the $(q+1)^{\textrm{th}}$ point is at $\infty$.

Bott and Taubes \cite{Bott-Taubes} consider a fiber bundle whose total space $E_{q,t}$ is the pullback in the square below:
\[
\xymatrix{
&E_{q,t} \ar[r] \ar[d] & C_{q+t}(\R^3) \ar[d] \\
&\Emb(S^1, \R^3) \x C_q(S^1)_{conn} \ar[r] & C_q(\R^3) }
\]
In this diagram the space $C_q(S^1)_conn$ denotes one connected component of the compactified configuration space of $q$ points on the circle, while the spaces on the right side of the square are as defined above.  The lower horizontal arrow sends a knot $f$ and $q$ points on $S^1$ to the images of the $q$ points under $f$.  The right-hand map just projects to the first $q$ points.  The bundle that Bott and Taubes considered is 
\[
\xymatrix{
F_{q,t} \ar[r] & E_{q,t} \ar[d]^\pi \\ 
 &  \Emb(S^1, \R^3)}
\]
where $\pi$ is the left-hand map in the above square followed by the projection onto the first factor.  They showed that the fiber $F=F_{q,t}$ is a smooth (finite-dimensional) manifold with corners.  Cattaneo, Cotta-Ramusino, and Longoni considered the same bundle but with $\R^3$ replaced by $\R^n$ \cite{Cattaneo}.

To briefly review the main results of \cite{Bott-Taubes}, recall that we have maps $\phi_{ij}:C^0_{q+t}(\R^3) \to S^2$ given by 
\[
\phi_{ij}(x_1,...,x_q)=\frac{x_i - x_j}{|x_i - x_j|} 
\]
which extend to the compactifications $C_{q+t}(\R^3)$; if $\omega$ is a volume form on $S^2$, the images of the pullbacks $\omega_{ij} := \phi_{ij}^*\omega$ in cohomology generate $H^*C_{q+t}(\R^3)$ as an algebra \cite{Arnold, FCohen}.  Let $\theta_{ij}$ denote the pullbacks of the $\omega_{ij}$ to $\Omega_{dR}^* E_{q,t}$.  Recall that for any smooth fiber bundle $\xymatrix{F\ar[r] & E\ar[r]^-\pi & B}$, integration along the fiber is a map $\pi_*: \Omega_{dR}^p E \to \Omega_{dR}^{p-k} B$, where $k=\dim F$.  Bott and Taubes integrate (sums of) products $\theta$ of the $\theta_{ij}$ to get forms in $\Omega_{dR}^* \Emb(S^1, \R^3)$.  But because the fiber $F_{q,t}$ has boundary, it is not obvious that $\pi_*\theta$ is closed; in fact, Stokes' Theorem gives
\[
d\pi_* = \pi_* d + \pi^\d_*
\]
where $\pi^\d_*$ means ``restrict to $\d E_{q,t}$ and then integrate along $\d F_{q,t}$".  One main result of Bott and Taubes is a proof of the following, which had already been proven using physics-related techniques coming from Chern-Simons theory.
\begin{theorem}[Guadagnini, Martellini, and Mintchev \cite{Guadagnini}; Bar-Natan \cite{BNthesis}; Bott and Taubes \cite{Bott-Taubes}]
\[\pi_*\left(\frac{1}{4} \theta_{13} \theta_{24} - \frac{1}{3}\theta_{14}\theta_{24}\theta_{34}\right)
\]
is a closed form, i.e., an element $\in H^0(\Emb(S^1, \R^3))$. \qed
\end{theorem}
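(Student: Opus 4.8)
The plan is to apply Stokes' theorem, in the form $d\pi_* = \pi_* d + \pi^\d_*$ recalled above, to each of the two summands separately. Since $\omega$ is a $2$-form on $S^2$ it is closed, so every $\theta_{ij} = \phi_{ij}^*\omega$ is closed, hence so are the products $\theta_{13}\theta_{24}$ on the total space with $q=4,t=0$ and $\theta_{14}\theta_{24}\theta_{34}$ on the one with $q=3,t=1$. Therefore $\pi_* d$ of each vanishes, and $d\pi_*(\theta_{13}\theta_{24}) = \pi^\d_*(\theta_{13}\theta_{24})$, $d\pi_*(\theta_{14}\theta_{24}\theta_{34}) = \pi^\d_*(\theta_{14}\theta_{24}\theta_{34})$. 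Both are $1$-forms on $\Emb(S^1,\R^3)$, and it suffices to prove
\[
\tfrac14\,\pi^\d_*(\theta_{13}\theta_{24}) \;=\; \tfrac13\,\pi^\d_*(\theta_{14}\theta_{24}\theta_{34}).
\]
Each side is the sum, over the codimension-one boundary faces of the corresponding compactified fiber ($F_{4,0}$ on the left, $F_{3,1}$ on the right), of the fiberwise integral along that face.

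Next I would enumerate those faces. For $F_{4,0}$ they are indexed by the subsets of consecutive points on the knot that collide. For $F_{3,1}$ they are: subsets of consecutive knot points colliding; subsets containing the free point colliding; the free point limiting onto the knot at a point distinct from the three knot points; and the free point escaping to infinity. I would then show that all but a few contribute $0$, by one of: (i) on a face where two or more of the direction maps entering the integrand acquire a common limiting value — the free point going to infinity, or certain collisions — the integrand picks up a factor $\omega\wedge\omega = 0$; (ii) on a ``hidden'' face, where three or more points collide or the free point limits onto a generic knot point, the integrand is independent of one of the blow-up moduli over which the fiber integral must integrate, so the integral vanishes (Kontsevich's vanishing argument); (iii) the ``top'' faces where all points collide vanish for similar reasons (for $F_{3,1}$ one uses the antipodal symmetry $\omega\mapsto-\omega$ of $S^2$; there is no anomaly in this low degree).

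What survives is, on one hand, the four faces of $F_{4,0}$ where a pair of consecutive knot points collides — each such collision merges the two chords into two arcs emanating from a single point — and, on the other hand, the three faces of $F_{3,1}$ where the internal vertex limits onto one of the three knot points — which, after integrating out the blow-up $S^2$-modulus against $\omega$ (using $\int_{S^2}\omega = 1$), likewise leaves two arcs emanating from a single knot point. A direct computation of the limiting direction maps shows that every face in both families equals, up to sign and a relabeling of the remaining knot points, one and the same basic $1$-form $\Phi$ on $\Emb(S^1,\R^3)$ (the fiber integral over $C_3(S^1)$ of a wedge of two copies of $\omega$ pulled back along direction maps from one point to the other two). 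Hence $\pi^\d_*(\theta_{13}\theta_{24}) = \pm 4\Phi$ and $\pi^\d_*(\theta_{14}\theta_{24}\theta_{34}) = \pm 3\Phi$, and the coefficients $\tfrac14$ and $\tfrac13$ are exactly the ones that make the difference vanish. This is the Bott--Taubes incarnation of the STU (equivalently IHX) relation between chord diagrams and Jacobi diagrams.

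The step I expect to be the main obstacle is the sign bookkeeping: pinning down the boundary orientations that $F_{4,0}$ and $F_{3,1}$ induce on their codimension-one faces, the signs coming from Stokes and from the relabelings identifying each surviving face with $\pm\Phi$, and checking that all surviving faces contribute with compatible signs, so that indeed $\tfrac14\cdot 4$ and $\tfrac13\cdot 3$ match with the correct sign. A subsidiary technical point is to make the vanishing (ii) rigorous — verifying genuine factorization of the integrand through a lower-dimensional quotient rather than mere asymptotic degeneration — which requires a careful description of the corner structure of the compactified fibers near their deeper strata.
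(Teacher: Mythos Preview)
The paper does not prove this theorem: it is stated with a terminal \qed and attributed to the cited references \cite{Guadagnini, BNthesis, Bott-Taubes} as background motivating the rest of the article. So there is no ``paper's own proof'' to compare against.

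Your sketch is, however, a faithful outline of the argument in \cite{Bott-Taubes}: reduce to the boundary via the Stokes identity $d\pi_* = \pi_*d + \pi^\partial_*$, discard the hidden and infinity faces by degeneracy/symmetry arguments, and match the surviving principal faces of $F_{4,0}$ and $F_{3,1}$ against a common $1$-form $\Phi$, with the $4$ and $3$ accounting for the coefficients. Two small corrections to your face enumeration: (1) the free point in $F_{3,1}$ landing on the knot away from the other three points is \emph{not} a boundary face---the $t$ points are unconstrained in $\R^3$, so this happens already in the interior; (2) the codimension-one faces of $F_{4,0}$ include not just the four pair collisions but also four triple collisions and the full quadruple collision, all of which you must (and do, under ``hidden'' and ``top'') dispose of. Your own caveats are apt: the orientation/sign bookkeeping is genuinely where the work lies, and the ``no anomaly in this low degree'' claim for the top face of $F_{3,1}$ is correct but requires the antipodal-symmetry argument you allude to rather than being automatic.
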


Other authors built upon their methods to prove more general results.  Thurston used them to show that any functional on chord diagrams can be integrated to a knot invariant (for knots in $\R^3$) \cite{Thurston}; Voli\'{c} provided more details on this in \cite{VolicBT}.  Cattaneo, Cotta-Ramusino, and Longoni constructed a graph cochain complex with a chain map to $\Omega_{dR}^* (\Emb(S^1, \R^n))$ inducing an injective map in cohomology \cite{Cattaneo}.  The integration along the fiber which we will carry out is very similar to that of Bott and Taubes, but done in a purely algebro-topological setting.

\subsection{Budney and Cohen's computation of the homology of the long knot space}

In recent work, Budney and Cohen computed the homology of the space of long knots.  This space, denoted $\Emb(\bR, \bR^3)$, is defined as the space of embeddings $f:\bR\to\bR^3$ which satisfy $f(t)=(t,0,0)$ for $|t|\geq 1$ and $f[-1,1]\subset [-1,1]\x D^2$.  This result followed from a result of Budney's \cite{Budney}, which says that the space of long knots is the free little 2-cubes object on the space of prime knots, combined with Cohen's computation of the homology of little 2-cubes objects $\C_2 X$ \cite{FCohen}.  We recall some of the details below.

Let $I=[-1,1]$.  Recall that the little 2-cubes operad is the operad that to each $n\in \bN$ associates the space $\C_2(n)$ of $n$ disjoint affine-linear embeddings of $I\x I \incl I\x I$.  We say $X$ has a $\C_2$-action if for each $n$ there is a map
\[
\C_2(n) \x_{\Sigma_n} X^n \to X
\]
satisfying certain associativity, equivariance, identity conditions (see \cite{goils} for the precise statements).  Each space $\C_2(n)$ can be thought of as parametrizing ways to multiply $n$ elements of $X$.  If $X$ has a $\C_2$-action, it has a multiplication which is homotopy-commutative, as can be seen by rotating two 2-cubes in the plane.  

In \cite{Budney}, Budney considered the space of embeddings $f: \bR\x D^2 \incl \bR\x D^2$ such that $f$ is the identity outside of $[-1,1]\x D^2$, which is homotopy equivalent to the space of {\it framed} long knots in $\R^3$.  He defined a $\C_2$-action on this space.  (More generally, he showed that the space of smooth embeddings $\R^n \x M \incl \R^n \x M$ which are the identity outside of $[-1,1]\x M$ has an action of the little $(n+1)$-cubes.)  The subspace $\K$ of all $f$, such that the linking number of $f|_{\bR\x(0,0)}$ with $f|_{\bR\x(0,1)}$ is 0, is homotopy equivalent to $\Emb(\bR, \bR^3)$ via $f\mapsto f|_{\bR\x (0,0)}$, and one can restrict the $\C_2$-action to $\K$.  The multiplication induced by this action on components of $\K$ corresponds to connect-sum of long knots, so this $\C_2$-action reflects the fact that connect-sum is commutative on $\pi_0\K$, which can be seen elementarily by pulling one knot through the other.  The paper \cite{Budney} contains figures clearly illustrating this fact, as well as the $\C_2$-action on the space of ``fat long knots" $\K$.  The main result therein is 
\begin{theorem}[Budney \cite{Budney}]
If $\K$ is the space of long knots in $\R^3$ (or homotopy equivalent to it) and $\P$ is the space of prime knots, then 
\[
\K \simeq \C_2( \P \sqcup\{*\}) := \coprod_{j=0}^\infty \C_2(j)\x_{\Sigma_j} \P^j. 
\]
\qed
\end{theorem}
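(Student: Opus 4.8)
The plan is to exhibit the canonical comparison map and show it is a weak homotopy equivalence, one path-component at a time. Budney's little $2$-cubes action makes $\K$ a $\C_2$-algebra, so the inclusion $\P\incl\K$ of the subspace of prime long knots extends, by the universal property of the free $\C_2$-algebra, to a map
\[
\Phi\colon\C_2(\P\sqcup\{*\})=\coprod_{j\geq 0}\C_2(j)\x_{\Sigma_j}\P^j\longrightarrow\K,
\]
sending the $j=0$ summand (a point) to the long unknot, which is the unit for connect-sum. It then suffices to show $\Phi$ is a weak equivalence. On $\pi_0$ this is exactly Schubert's theorem on unique prime decomposition of knots: both $\pi_0\K$ and $\pi_0\C_2(\P\sqcup\{*\})$ are the free commutative monoid on the set $\pi_0\P$ of prime knot types, and $\Phi$ induces this identification.

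Now fix a knot type $j$ with prime decomposition $j=j_1\#\cdots\# j_n$, group the summands by isotopy type with multiplicities $m_1,\dots,m_k$ (so $m_1+\cdots+m_k=n$), and observe that the component of the source lying over $j$ is $\C_2(n)\x_{\Sigma_{m_1}\x\cdots\x\Sigma_{m_k}}\prod_i(\P_{j_i})^{m_i}$, where $\P_{j_i}\subset\P$ is the component of $j_i$. The theorem thus reduces to showing that for every $j$ the map $\Phi$ restricts to a homotopy equivalence of this space with the path component $\K_j\subset\K$. The cases $n=0$ and $n=1$ are respectively the contractibility of the long-unknot component --- a consequence of Hatcher's Smale conjecture, $\Diff(D^3\text{ rel }\partial)\simeq *$ --- and essentially a tautology, since $\K_j=\P_j$ and $\C_2(1)\simeq *$. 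So assume $n\geq 2$.

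For $n\geq 2$ the idea is to slice along Schubert (connect-sum) spheres. Let $\widetilde{\K}_j$ be the space of pairs consisting of a long knot $k$ of type $j$ together with a system of $n$ disjoint embedded $2$-spheres, each meeting $k$ transversally in two points, that exhibits its prime decomposition (the spheres labeled by the isotopy type of the summand each one cuts off). Forgetting the spheres gives a map $\widetilde{\K}_j\to\K_j$ whose fiber is the space of such sphere systems for a fixed knot; this fiber is contractible by Hatcher's results on spaces of incompressible surfaces in Haken $3$-manifolds (parametrized uniqueness of the prime decomposition), so the map is a homotopy equivalence. Forgetting the knot gives a fibration $\widetilde{\K}_j\to\cS_j$ onto the space $\cS_j$ of such labeled sphere configurations, whose fiber over a fixed configuration splits as a product over the $n$ regions of the spaces of $1$-string tangles in a ball capping off to the prescribed prime piece; each such tangle space is homotopy equivalent to the corresponding prime component $\P_{j_i}$, using the Smale conjecture and the JSJ (Jaco--Shalen--Johannson) description of the relevant knot complement. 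Shrinking the spheres to small round spheres strung along a standard long knot identifies $\cS_j$ with $\mathrm{Conf}_n(\R^2)/(\Sigma_{m_1}\x\cdots\x\Sigma_{m_k})$, which is homotopy equivalent to $\C_2(n)/(\Sigma_{m_1}\x\cdots\x\Sigma_{m_k})$ --- the availability of two transverse sliding directions being precisely what makes this the little $2$-cubes rather than the little $1$-cubes. The pure-braid monodromy of the pulled-back fibration over $\mathrm{Conf}_n(\R^2)$ is trivial (dimension $3$), so its total space is $\mathrm{Conf}_n(\R^2)\x_{\prod_i\Sigma_{m_i}}\prod_i\P_{j_i}^{m_i}$; putting everything together, $\K_j\simeq\widetilde{\K}_j\simeq\C_2(n)\x_{\prod_i\Sigma_{m_i}}\prod_i\P_{j_i}^{m_i}$, and one checks that this equivalence is the restriction of $\Phi$.

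The main obstacle is the $3$-manifold-topological heart of the argument: proving that the space $\cS_j$ of decomposing-sphere systems is (equivariantly) homotopy equivalent to the little $2$-cubes space $\C_2(n)$, and that each complementary block contributes exactly the homotopy type of a prime knot space. This is where Hatcher's theorem on parametrized families of incompressible surfaces and the Smale conjecture do the real work. The remaining ingredients --- promoting ``forget the spheres'' and ``forget the knot'' to honest fibrations, handling the non-compact and boundary subtleties of the long-knot setting (compactifying $\R^3$ to $S^3$ or $D^3$ and tracking the standard behavior outside a ball, or working with the ``fat long knots'' of section \ref{background}), and checking that the $\prod_i\Sigma_{m_i}$-bookkeeping is compatible with the operadic $\C_2$-structure rather than merely with an abstract action --- are comparatively routine once those inputs are in hand.
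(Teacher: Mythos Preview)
The paper does not prove this theorem at all: it is quoted as Budney's result from \cite{Budney} and marked with a \qed\ immediately after the statement, with no argument given. So there is no ``paper's own proof'' to compare against.

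That said, your sketch is a reasonable outline of Budney's actual argument in \cite{Budney}: the map $\Phi$ is indeed the free-algebra comparison map, $\pi_0$ is handled by Schubert's unique prime decomposition, and the component-by-component analysis rests on Hatcher's results (contractibility of the unknot component via the Smale conjecture, and parametrized uniqueness of decomposing sphere systems via his work on spaces of incompressible surfaces). The one place where your account is thinner than Budney's is precisely where you flag it: identifying the space of decomposing-sphere systems with (a quotient of) $\C_2(n)$, and checking that the resulting equivalence is compatible with the operad structure and not merely component-wise. In Budney's paper this is done with some care using his explicit fat-long-knot model and a specific ``pull one knot through the other'' homotopy; your sketch gestures at this but does not carry it out. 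As a summary of the strategy it is fine; as a self-contained proof it would need that step filled in.
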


To state the result on the homology of the space of long knots in $\R^3$, we need one definition:
\begin{definition} 
A {\it Gerstenhaber-Poisson algebra} $A$ is a graded-commutative $\Q$-algebra with a graded-skew-symmetric bilinear map 
\[
\{ \>, \>\}: A_m\otimes A_n \to A_{m+n+1}
\]
satisfying 
\begin{itemize}
\item[(1)]
Jacobi identity: $\{a,\{b,c\}\} = \pm \{\{a,b\},c\} \pm \{\{a,c\},b\}$
\item[(2)]
Leibniz rule: $\{a\cdot b, c\}=a\cdot \{b,c\} + (-1)^{|a||b|}b\cdot \{a,c\}$.  \qed
\end{itemize}
\end{definition}

Fred Cohen computed the homology of $\C_n X$ in terms of the homology of $X$ \cite{FCohen}, which combined with Budney's theorem above gives 
\begin{theorem}[Budney and Cohen \cite{Budney-Cohen}]
\begin{itemize}
\item[(1)]
$H_*(\K; \Q)$ is a free Gerstenhaber-Poisson algebra generated by $H_*(\P;\Q)$.
\item[(2)]
$H_*(\K; \Z/p)$ is a free ``restricted Gerstenhaber-Poisson algebra" generated by $H_*(\P; \Z/p)$. \qed
\end{itemize}
\end{theorem}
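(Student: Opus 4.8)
The plan is to deduce the theorem formally from the two results already in hand: Budney's structure theorem, recalled above, and Fred Cohen's computation of the homology of free little $n$-cubes objects \cite{FCohen}. First I would invoke Budney's theorem, which identifies $\K$ up to homotopy equivalence with the free little $2$-cubes object $\C_2(\P \sqcup \{*\}) = \coprod_{j \geq 0} \C_2(j) \x_{\Sigma_j} \P^j$ on the space $\P$ of prime knots. Since homology is a homotopy invariant, this reduces the problem to computing $H_*(\C_2(\P \sqcup \{*\}); k)$ as an algebra equipped with its product and its bracket, for $k = \Q$ and for $k = \Z/p$.

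Next I would appeal to Cohen's description of $H_*(\C_n X; k)$ for an arbitrary space $X$. The little $n$-cubes multiplication $\C_n(2) \x X^2 \to X$ induces, via the K\"unneth theorem and the fundamental class of $H_0 \C_n(2)$, a graded-commutative product on $H_*(\C_n X; k)$, while the class generating $H_{n-1} \C_n(2)$ induces the Browder bracket, which is graded-skew-symmetric of degree $n - 1$ and satisfies the Jacobi identity and the Leibniz rule. Over $\Q$, Cohen's theorem states that $H_*(\C_n X; \Q)$ is the \emph{free} algebra of this type on the reduced homology $\tH_*(X; \Q)$; for $n = 2$ the bracket raises degree by $1$, and this is precisely the free Gerstenhaber--Poisson algebra in the sense defined above. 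Over $\Z/p$ one must additionally carry along the Dyer--Lashof-type operations produced by the $\Sigma_j$-quotients, and Cohen's theorem then identifies $H_*(\C_n X; \Z/p)$ with the free \emph{restricted} Gerstenhaber--Poisson algebra on $\tH_*(X; \Z/p)$, which is the structure named in part (2).

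I would then specialize to $X = \P \sqcup \{*\}$, using the elementary identification $\tH_*(\P \sqcup \{*\}; k) \cong H_*(\P; k)$: the disjoint point supplies the algebra unit, and discarding it passes from unreduced to reduced homology. Hence the free (restricted) Gerstenhaber--Poisson algebra on $\tH_*(\P \sqcup \{*\}; k)$ is the free (restricted) Gerstenhaber--Poisson algebra on $H_*(\P; k)$, and combining this with the homotopy equivalence $\K \simeq \C_2(\P \sqcup \{*\})$ yields both statements.

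The hard part is contained entirely in Cohen's computation, which is the substantial technical ingredient: it rests on a stable splitting of $\C_n X$ indexed by the number of points, on the known homology of the unordered configuration spaces $\C_n(j)/\Sigma_j$ of points in $\R^n$ together with their structure under the Browder bracket and the Dyer--Lashof operations, and on a careful analysis of how the labels in $X$ interact with these operations. Granting \cite{FCohen} and \cite{Budney}, the remaining work is bookkeeping: confirming that the product and bracket transported to $H_*\K$ along Budney's equivalence agree with the operations appearing in the definition above (degree conventions included), and checking that passing to $\tH_*(\P \sqcup \{*\})$ introduces no spurious generators while the basepoint component furnishes exactly the unit.
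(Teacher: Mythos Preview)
Your proposal is correct and matches the paper's treatment: the paper does not give a proof at all but simply states the theorem with a \qed, having just remarked that Cohen's computation of $H_*(\C_n X)$ combined with Budney's theorem $\K \simeq \C_2(\P \sqcup \{*\})$ yields the result. Your write-up spells out exactly this deduction in more detail than the paper does, and the bookkeeping you describe (identifying $\tH_*(\P\sqcup\{*\})$ with $H_*(\P)$, matching the degree-$1$ bracket to the Gerstenhaber--Poisson structure) is precisely what is needed.
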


We are interested in these results because they indicate the possibility of calculating the evaluations of our classes in $H^*\K$ on arbitrary classes in $H_*\K$ in terms of evaluations of certain related cohomology classes on homology classes in $H_*\P$.


\subsection{Definition of our fiber bundle}

Now we can define the bundle we will consider.  
The total space of the bundle is the pullback below, where $C_q(\R)$ is  the compactified configuration space of $q$ points in $\R$, the spaces on the right are just compactified configuration spaces, and $\K$ is the space of fat long knots (and is homotopy equivalent to the space of long knots).  Since the interior of $\R\x D^2$ is homeomorphic to  $\bR^3$, we can take $C_q(\R\x D^2)$ to be the compactified configuration space $C_q(\bR^3)$ as defined by Bott and Taubes (i.e., subspace of $C_{q+1}(S^3)$).  The lower horizontal map below is given by restricting $f\in\K$ to $\bR\x(0,0)$ and then evaluating this embedding on the $q$ points. 
\begin{equation}
\xymatrix{
&E_{q,t} \ar[r] \ar[d] & C_{q+t}(\R\x D^2) \ar[d] \\
&\K\x C_q(\R) \ar[r] & C_q(\R\x D^2) 
}\label{Edefn}
\end{equation}
The bundle is given by 
\[
\xymatrix{
F_{q,t} \ar[r] & E_{q,t} \ar[d]^\pi \\ 
 &  \K}
\]
where $\pi$ is the left-hand vertical map in the square above followed by projection onto the first factor.  Thus a point in $E=E_{q,t}$ can be thought of as a fat (but not framed) long knot together with $q+t$ points in $\R\x D^2$ such that the first $q$ points are on the underlying long knot. We consider the bundle $\pi:E\to \K$ given by the top left-hand map above followed by projection.  So the fiber $F=F_{q,t}$ is the pullback below:
\[
\xymatrix{
F_{q,t} \ar[r] \ar[d] & C_{q+t}(\R\x D^2) \ar[d] \\
C_q(\R) \ar[r] & C_q(\R\x D^2) }
\]
$F$ is again a smooth manifold with corners, by the same argument as in the appendix of \cite{Bott-Taubes}.  \\ 


\section{The Pontrjagin-Thom map}
\label{xfer}
In this section, we will construct a Pontrjagin-Thom map (sometimes called the pre-transfer, or umkehr, map) for our bundle, which will induce a map in cohomology corresponding to integration along the fiber.  References for these maps in general include \cite{Cohen-Klein}, \cite{Becker-Gottlieb} and, at a more elementary level, \cite{Bredon}.  It will be natural to consider not just a map of spaces, but a map of spectra.  The entire section will be devoted to details of this construction.

\subsection{Manifolds with faces}
Our fiber $F$ is a manifold with corners, and in our construction we will try to retain as much of the corner structure as possible.  To do so, it is convenient to use the following results and terminology mostly from work of Laures \cite{Laures} to begin our construction of the pre-transfer map.  Let $\bR^{\L,N}$ denote $[0,\infty)^L\x \bR^N$ where possibly $N=\infty$.  Recall that a \emph{manifold with corners} is a space such that every point has a neighborhood diffeomorphic to some $\R^{\L,N}$.  We call the number of 0's in the coordinates at a point $x$ the \emph{codimension} $c(x)$ of $x$.  We define a {\it connected face} of $X$ as the closure of a component of $\{x|c(x)=1\}$.  
\begin{definition}
(J\"{a}nich)
$X$ is a {\it manifold with faces} or {\it $\L$-manifold} if 
\begin{itemize}
\item[(0)]
each $x\in X$ belongs to $c(x)$ different connected faces
\end{itemize}
and if we have (disjoint unions of connected) faces $(\partial_0 X, ..., \partial_{n-1}X)$ such that 
\begin{itemize}
\item[(1)] 
\[ \bigcup_{i=0}^{n-1} \partial_i X = \partial X\] and
\item[(2)]
$\forall \> i\neq j$, $\partial_i X \cap \partial_j X$ is a face of both $\partial_i X$ and $\partial_j X$.  \qed
\end{itemize}
\end{definition}

Note that not every manifold with corners can be given the structure of a manifold with faces.  For example consider a space homeomorphic to a circle, but pinched so that it has one corner.  At this point condition (0) above is not satisfied.

Laures described manifolds with faces using categorical language.   Let $\underline{2}$ denote the category $\{ \xymatrix{ 0 \ar[r] & 1} \}$.  We can think of an $\L$-manifold as a functor $X:\underline{2}^L \to \mathcal{T}op$, which we also call an $\L$-diagram of spaces or an $\L$-space.  In fact, for each $a\in \underline{2}^L$, set
\[
X(a) := \bigcap_{ \substack{i: a\leq (1,1,...,1,0,1,...,1) \\ 0 \mbox{ in } i^{\mathrm{th}} \mbox{ coord} } } \partial_i X.
\]
More generally, for any category $\C$ we will use the terminology $\L$-diagram (of e.g., groups, vector spaces, spectra) for a functor $\underline{2}^L \to \C$.  

\begin{definition}
A {\it neat embedding} $\xymatrix{X \ar@{^(->}[r]^-\iota & \R^{\L,N}:=[0,\infty)^L\x \R^N}$ is a natural transformation of functors so that 
\begin{itemize}
\item[(1)]
each $\iota(a)$ is an inclusion of a submanifold and
\item[(2)]
$\forall \> b<a$, the intersection of $\iota(X(a))$ with $\R^{\L,N}(b)$ is perpendicular.  \qed
\end{itemize}
\end{definition}

In particular, the normal bundle to a neat embedding is well defined.  The following gives a nice characterization of $\L$-manifolds, whose definition may at first seem unnatural. 

\begin{proposition}[Laures \cite{Laures}]
For any manifold with corners $X$, $X$ is an $\L$-manifold $\iff$ it can be neatly embedded into $\R^{\L,N}$ for some $N$.
\qed
\end{proposition}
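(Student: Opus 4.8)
The plan is to prove both implications with the manifold‑with‑corners $X$ held fixed throughout, reading the statement as: $X$ admits the structure of an $\L$-manifold (a choice of faces) if and only if there are $N$ and an embedding $\iota\colon X\incl\R^{\L,N}$ meeting each coordinate subspace $\R^{\L,N}(b)$ perpendicularly, in which case the faces may be recovered as $\partial_iX=\iota^{-1}\{x_i=0\}$.

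\textbf{The reverse implication.} Given such an $\iota$, I would set $\partial_iX:=\iota^{-1}\{x_i=0\}$ for $0\le i\le L-1$ and check conditions (0)--(2) directly. The key observation is that perpendicularity forces, at each $x\in X$ for which exactly the coordinates $x_{i_1},\dots,x_{i_k}$ vanish, an ambient chart of $\R^{\L,N}$ in which $\iota(X)$ is the standard model $[0,\infty)^k\times\R^{\,n-k}$ ($n=\dim X$) through $\iota(x)$, meeting precisely the hyperplanes $\{x_{i_1}=0\},\dots,\{x_{i_k}=0\}$. From this, $c(x)=k$, the faces $\partial_{i_1}X,\dots,\partial_{i_k}X$ through $x$ are pairwise distinct near $x$ (condition (0)); $\partial X=\iota^{-1}\bigl(\bigcup_i\{x_i=0\}\bigr)$ gives condition (1); and $\partial_iX\cap\partial_jX=\iota^{-1}\{x_i=x_j=0\}$ is again a neat submanifold which is visibly a coordinate face of both $\partial_iX$ and $\partial_jX$ (condition (2)).

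\textbf{The forward implication.} Now suppose $X$ carries faces $\partial_0X,\dots,\partial_{L-1}X$. Step one is to produce \emph{compatible defining functions} $\rho_0,\dots,\rho_{L-1}\colon X\to[0,\infty)$ with $\rho_i^{-1}(0)=\partial_iX$, with $0$ a regular value of each $\rho_i$, and — crucially — such that near any $x$ lying in exactly the faces $\partial_{i_1}X,\dots,\partial_{i_k}X$ the map $(\rho_{i_1},\dots,\rho_{i_k})$ is a submersion onto a neighborhood of $0$ in $[0,\infty)^k$ fitting a corner chart of $X$. Equivalently, one wants a mutually compatible system of collars $c_i\colon\partial_iX\times[0,\infty)\incl X$ of the faces that restrict, for $i\ne j$, to collars of $\partial_iX\cap\partial_jX$ inside $\partial_jX$, and similarly for deeper intersections. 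This is the collar‑neighborhood theorem for manifolds with faces; I would invoke it from Laures \cite{Laures} (or the manifolds‑with‑corners literature) or prove it by induction on $\dim X$, using condition (2) precisely to isotope a chosen collar of $\partial_iX$ into compatibility with the already‑chosen collars of the lower faces. Setting $\rho:=(\rho_0,\dots,\rho_{L-1})\colon X\to[0,\infty)^L$ yields a natural transformation of $\L$-diagrams realizing the given faces, ``neat except for failing to be injective.''

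Step two is to adjoin $g\colon X\to\R^N$ so that $\iota:=(\rho,g)\colon X\to\R^{\L,N}$ is a neat embedding. I would build $g$ to be \emph{collar‑invariant near the boundary}: on the collar of each $\partial_iX$ let $g$ factor through the collar retraction onto $\partial_iX$ followed by a fixed embedding of $\partial_iX$, done compatibly over all faces via the system from Step one; this forces $\iota$ to meet every $\R^{\L,N}(b)$ perpendicularly, i.e.\ to be neat. On the interior, where all $\rho_i>0$, one still needs $\iota$ injective and an immersion: patch the collar‑invariant pieces to a Whitney embedding by a partition of unity subordinate to $\{$interior$,$ collar neighborhoods$\}$ and then apply a generic perturbation supported away from $\partial X$ to remove self‑intersections, using compactness of $X$ to upgrade the resulting injective immersion to an embedding. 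Alternatively, Steps one and two can be merged into one induction on the depth $\max_x c(x)$: neatly embed the (closed) deepest corner stratum by Whitney, thicken it by a corner collar, and extend over the complement of a smaller corner collar, which is again an $\L$-manifold of strictly smaller depth.

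\textbf{Main obstacle.} Everything hinges on the two ``compatible collaring'' assertions — that the collars of the faces and of all their intersections can be chosen \emph{simultaneously}, so as to assemble into the $\rho_i$ and to keep the completed embedding perpendicular to every $\R^{\L,N}(b)$ at once. This is the corners‑with‑faces refinement of the classical collar and tubular‑neighborhood theorems, and it is exactly the place where the manifold‑with‑faces hypothesis (condition (2), which at first ``seems unnatural'') is used in an essential way; by contrast, the Whitney embedding, the genericity argument for injectivity, and the partition‑of‑unity patching are all standard.
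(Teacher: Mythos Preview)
The paper does not prove this proposition: it is stated with attribution to Laures \cite{Laures} and closed with a \qed, i.e., it is quoted as a black box. So there is no argument here to compare yours against.

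For what it is worth, your outline is correct and is essentially the argument Laures gives. The reverse direction is immediate once one pulls back the coordinate hyperplanes $\{x_i=0\}$. For the forward direction you have correctly isolated the one nontrivial ingredient: the existence of a compatible system of collars for all the faces of an $\L$-manifold, which is precisely where condition (2) in the definition is used. The present paper in fact cites exactly this ingredient later (as ``Lemma 2.1.6 of \cite{Laures}'') when it needs a well-defined normal bundle for a neat embedding. Once the compatible collars furnish the map $\rho\colon X\to[0,\infty)^L$, your collar-invariant Whitney embedding into the $\R^N$ factor, perturbed away from $\partial X$ to achieve injectivity, completes the neat embedding; this is standard.
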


\subsection{A neat embedding of our total space}
\label{neatembtotspace}

To proceed with a Pontrjagin-Thom construction, we need an embedding of the total space of finite codimension.  Of course $E\incl \K\x C_q(I) \x C_{q+t}(\R\x D^2)$ by definition, but since the $q$ points on $I$ are {\it embedded} by a knot in $\K$, we even have $E\incl \K \x C_{q+t}(\R\x D^2)$.  By Laures' proposition, the following guarantees that $C_{q+t}$ can be neatly embedded into some $\bR^{\L,N}$.

\begin{lemma}
$C_{q+t}:=C_{q+t}(\R\x D^2)=C_{q+t}(\bR^3)$ is a manifold with faces.
\end{lemma}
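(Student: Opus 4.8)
The plan is to upgrade the already-available fact that $C_{q+t}:=C_{q+t}(\R^3)$ is a smooth compact manifold with corners (Axelrod--Singer \cite{Axelrod-Singer}; see also the appendix of \cite{Bott-Taubes}) to a J\"anich face structure, by reading the candidate faces directly off the standard stratification of the compactification. First I would recall that the strata of $C_{q+t+1}(S^3)$ are indexed by nested families $\mathcal{F}$ of subsets of $\{1,\dots,q+t+1\}$, each of cardinality $\geq 2$ and any two of them either nested or disjoint, with the codimension of the stratum of $\mathcal{F}$ equal to $|\mathcal{F}|$, and with the stratum of $\mathcal{F}'$ contained in the closure of the stratum of $\mathcal{F}$ exactly when $\mathcal{F}\subseteq\mathcal{F}'$ (the Fulton--MacPherson combinatorics \cite{Fulton-MacPherson}, transported to the smooth setting in \cite{Axelrod-Singer}). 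Intersecting with the subspace $C_{q+t}\subset C_{q+t+1}(S^3)$ on which the last point sits at $\infty$ reindexes these strata by nested families on $\{1,\dots,q+t\}$, now also allowing a distinguished nonempty subset of the points to run off to infinity together, still with codimension the number of degenerations and with the inherited closure relations. Consequently each $x\in C_{q+t}$ lies in a unique stratum, indexed by a family $\mathcal{F}(x)$, and $c(x)=|\mathcal{F}(x)|$.

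For each subset $S$ occurring as a singleton family let $\partial_S$ be the closure of the corresponding codimension-one stratum; these will be the connected faces. Next I would check that each $\partial_S$ is genuinely connected, so that it is a single connected face and not a union of several. The open codimension-one stratum indexed by $S$ is a fiber bundle over an uncompactified configuration space of $\R^3$ --- the points outside $S$ together with their common collision point --- with fiber the space of infinitesimal $S$-configurations modulo translation and positive scaling (a similar quotient of a configuration space of $\R^3$ in the escape-to-infinity case). Since configuration spaces of $\R^n$ are connected for $n\geq 2$, both base and fiber are connected, hence the open stratum is connected, hence so is its closure $\partial_S$.

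It then remains to verify J\"anich's conditions (0)--(2). For (0), the closure relations give $x\in\partial_S$ iff $S\in\mathcal{F}(x)$, and distinct subsets give distinct faces, so the connected faces through $x$ are exactly the $\partial_S$ with $S\in\mathcal{F}(x)$, of which there are $|\mathcal{F}(x)|=c(x)$. Listing these $\partial_S$ as $(\partial_0 C_{q+t},\dots,\partial_{n-1}C_{q+t})$, their union is the locus where $c(x)\geq 1$, namely $\partial C_{q+t}$, which gives (1). For (2), given distinct subsets $S,S'$: if $\{S,S'\}$ is a valid nested family then $\partial_S\cap\partial_{S'}$ is the closure of the codimension-two stratum it indexes, which by the same analysis applied to $\partial_S$ and to $\partial_{S'}$ is a codimension-one face of each; if $S$ and $S'$ are incompatible then no nested family contains both, so $\partial_S\cap\partial_{S'}=\varnothing$, a trivial common face. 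The step I expect to be the real obstacle is (0): it relies both on the connectedness of the codimension-one strata, where the hypothesis $n\geq 2$ (here $n=3$) is essential --- this being precisely the point at which the pinched circle discussed above fails --- and on the exact closure relations of the compactification, which guarantee that no ``hidden'' face passes through $x$.
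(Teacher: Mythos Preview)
Your proof is correct and follows essentially the same approach as the paper's: both read the face structure off the Fulton--MacPherson/Axelrod--Singer stratification, take the faces $\partial_S$ to be the closures of the codimension-one strata indexed by single subsets $S$, and verify J\"anich's conditions from the nesting combinatorics and the closure relations. You are somewhat more thorough than the paper in two respects --- you explicitly track the ``at infinity'' strata coming from the $(q{+}t{+}1)$st point and you argue connectedness of each $\partial_S$ --- but these are refinements of the same argument rather than a different route.
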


\begin{proof}
From \cite{Fulton-MacPherson} or \cite{Axelrod-Singer}, we know that each stratum of this space is labeled by $\mathfrak{S}=\{S_1,...,S_k\}$ where the $S_i$ are subsets of $\{1,...,q\}$ with $|S_i|\geq 2$ such that for any pair $S_i,S_j$, either $S_i\cap S_j=\emptyset$ or one is contained in the other.  Furthermore, $k$ is the codimension of the stratum $|\mathfrak{S}|$ labeled by $\mathfrak{S}$.  So we can partition the boundary into faces $|\{S_1\}|,...,|\{S_L\}|$, where all possible $S_i$ appear in the list.  Then a point $x \in |\mathfrak{S}|$ of codimension $k$ is contained in precisely $k$ connected faces, namely those labeled by $\{S_1\},\{S_2\},...,\{S_k\}$.  Moreover, the intersection of two faces $|\{S_i\}|\cap |\{S_j\}|$ is a codimension 2 stratum, hence a face of each of $|\{S_i\}|$ and $|\{S_j\}|$, thus proving the claim.  
\end{proof}

As explained in the appendix in \cite{Bott-Taubes}, the codimension of a stratum in the fiber $F=F_{q,t}$ is precisely the codimension of the corresponding stratum in $C_{q+t}$; furthermore, it is easily seen that the strata in $F$ correspond precisely to the strata in $C_{q+t}$.  Thus $F$ is also a manifold with faces, and $E$ is too, since all of its corner structure comes from $F$. Because strata in $F$ correspond to those in $C_{q+t}$, the composition $E\incl \K \x C_{q+t}(I\x D^2) \incl \K\x \bR^{\L,N}$ restricts on each fiber to a neat embedding, and hence we have a neat embedding 
\[
e_N:E\incl \K \x \bR^{\L, N}
\]
 of the total space.

\subsection{Thom collapse map}

Let $\nu_N = \nu_{e_N}$ be the normal bundle to the embedding $e_N$ defined above.  This normal bundle is well defined because $e_N$ is a neat embedding (or more precisely, because there is an appropriate notion of a collar for a neatly embedded $\L$-manifold, as in Lemma 2.1.6 of \cite{Laures}).  

%
%

The tubular neighborhood theorem holds for neatly embedded manifolds, so we can identify a tubular neighborhood of $E$ in $\bR^{\L,N} \x \K$ with $\nu_N$.  
Quotienting by its complement gives a Thom collapse map 
\[
\K \x \bR^{\L,N} \to (\K\x \bR^{\L,N} , \K \x \bR^{\L,N} - \nu_N),
\]
i.e., 
\[
\K \x \bR^{\L,N} \to E^{\nu_N},
\]
 where  $E^{\nu_N}=\mathrm{Th}(\nu_N \to E)$ is the Thom space of $\nu_N$.  Since $E$ locally looks like a product of $\K$ and $F$,  the $\L$-manifold stucture on $F$ makes $E$ into an (infinite-dimensional) $\L$-manifold.  The Thom space $E^{\nu_N}$ is not a manifold, but it is still an $\L$-space, i.e., a diagram of spaces indexed by $\underline{2}^L$.  In the usual Thom-Pontrjagin construction, one would take the one-point compactification of the Euclidean space on the left to get a sphere, but in this case, the corner structure requires us to examine the boundary more carefully.

Let $\bR^{\L,N}\cup \{\infty\}$ be the one-point compactification of $\bR^{\L,N}$.  This space is diffeomorphic as an $\L$-space to an iterated cone on a sphere, $C^L S^N$, since each space is diffeomorphic to the unit ball in $\bR^{\L, N}$ modulo its topological boundary.  When extending the above map to $\K \x (\bR^{\L, N} \cup \{\infty\})$, $\K\x\{\infty\}$ maps to the basepoint, so we get 
\begin{equation}
\K_+ \wedge C^L S^N \to E^{\nu_N} \label{mapspaces}
\end{equation}
(where the subscript ``+" denotes disjoint union with a basepoint).  Let $\partial X$ denote the complement of the top stratum of any $\L$-space $X$.  The Thom collapse map induces
\begin{equation}
(\K_+ \wedge C^L S^N, \K_+ \wedge \partial(C^L S^N)) \to (E ^{\nu_N}, \partial E ^{\nu_N}) \label{mappairs}.
\end{equation}
which is 
\begin{equation}
\tau: \Sigma^{L+N} (\K_+) \to E^{\nu_N} / \partial E ^{\nu_N} \label{quotientspaces}
\end{equation}
(where $\Sigma X$ denotes the reduced suspension $S^1 \wedge X$).  Note that we get such a $\tau=\tau_{q,t}$ for each $q,t$, since $E=E_{q,t}$ depends on $q,t$.  

\begin{remark}
In ($\ref{mappairs}$), we could replace the ``$\partial$" terms on either side these by smaller unions of filtrations to get analogous maps for appropriate subsets of $\underline{2}^L$.  Furthermore, instead of considering on either side the spaces indexed by $(1,1,...,1)$ modulo their boundaries, we could consider the spaces indexed by some smaller $a\in \underline{2}^L$ modulo their boundaries to give a map from some (lower) suspension of $\K$ to part of the boundary of $E^{\nu_N}$ modulo its boundary; this would correspond to integration along the fiber on a subspace of the fiber where some of the points have collided.
\qed
\end{remark}

\subsection{A map of spectra}

We can choose the embeddings $e_N$ to be compatible with each other in that the following square commutes:
\[
\xymatrix{
E \ar@{^(->}[r]^-{e_N} \ar[d]_{\mathrm{id}} & \K \x \bR^{\L,N}\ar[d]^{\mathrm{id} \x \mathrm{inclusion}} \\
E \ar@{^(->}[r]^-{e_{N+1}} & \K \x \bR^{\L,N} \x \R }
\]
This gives us for each $N$ an isomorphism $\nu_N \oplus \eps^1 \cong \nu_{N+1}$.  This induces a map of Thom spaces
\[
\Sigma E^{\nu_N} \to  E^{\nu_{N+1}},
\]
and we can thus define a Thom spectrum $\E$ by $(\E)_{L+N} = E^{\nu_N}$ for sufficiently large $N$. Notice that the image under the above map of $\partial E^{\nu_N}$ is contained in $\partial E^{\nu_{N+1}}$, so there is a subspectrum $\partial \E$.  Let $\Sigma^{\infty} \K_+$ be the suspension spectrum of $\K$, and let $\Sigma^{\infty - L}\K_+$ be the spectrum whose $N^{\textrm{th}}$ space is $\Sigma^{N-L}\K_+$.  We have now set up the framework needed to prove the following:

\begin{theorem}
\label{mapofspectra}
The maps (\ref{mapspaces}) and (\ref{quotientspaces}) induce maps of spectra
\begin{equation}
\widetilde{\tau}: C^L\Sigma^{\infty-L} \K_+ \to \E \label{mapspectra}
\end{equation}
and 
\begin{equation}
\tau: \Sigma^{\infty}\K_+ \to \E/ \partial \E \label{quotientspectra}.
\end{equation}
\end{theorem}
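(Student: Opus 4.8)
The plan is to check that the space-level maps (\ref{mapspaces}) and (\ref{quotientspaces}), for varying $N$, are compatible with the structure maps of the spectra involved. First I would fix notation for the source spectrum: $C^L\Sigma^{\infty-L}\K_+$ is the spectrum whose $(L+N)$-th space is $C^L S^N\wedge\K_+$, with structure map
\[
\Sigma\bigl(C^L S^N\wedge\K_+\bigr) = \bigl(S^1\wedge C^L S^N\bigr)\wedge\K_+ \;\cong\; C^L S^{N+1}\wedge\K_+
\]
obtained by adjoining the new $\bR$-coordinate to the $\bR^N$-factor and re-compactifying, using $(A)^+\wedge(B)^+=(A\x B)^+$. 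The target $\E$ has $(L+N)$-th space $E^{\nu_N}$ (for $N$ large), with structure map $\Sigma E^{\nu_N}\to E^{\nu_{N+1}}$ induced by the isomorphism $\nu_N\oplus\eps^1\cong\nu_{N+1}$. Thus the first assertion reduces to the commutativity, for each large $N$, of the square
\[
\xymatrix{
\Sigma\bigl(\K_+\wedge C^L S^N\bigr) \ar[r] \ar[d] & \Sigma E^{\nu_N} \ar[d] \\
\K_+\wedge C^L S^{N+1} \ar[r] & E^{\nu_{N+1}}
}
\]
whose horizontal arrows are (\ref{mapspaces}) at levels $N$ and $N+1$, the top one suspended.

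To prove this I would use the compatibility square for $e_N$ and $e_{N+1}$ recorded just before the theorem: $e_{N+1}$ is $e_N$ followed by the inclusion $\bR^{\L,N}\hookrightarrow\bR^{\L,N}\x\bR=\bR^{\L,N+1}$ adjoining the new coordinate. Since that coordinate is an $\bR$-coordinate, it is transverse to and disjoint from the corner locus $[0,\infty)^L$; hence $e_{N+1}$ is again neat, its normal bundle is $\nu_N\oplus\eps^1$, and --- using the collar theorem for neatly embedded $\L$-manifolds (Lemma 2.1.6 of \cite{Laures}) --- a neat tubular neighborhood of $E$ in $\K\x\bR^{\L,N+1}$ can be chosen to be (a neat tubular neighborhood of $E$ in $\K\x\bR^{\L,N}$) $\x\,\bR$. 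With such compatible choices the Thom collapse defining (\ref{mapspaces}) at level $N+1$ is literally the one at level $N$ smashed with the identity of $S^1$ (the one-point compactification of the new $\bR$), and the isomorphism $\nu_{N+1}\cong\nu_N\oplus\eps^1$ used for the structure map of $\E$ is the one coming from the same product decomposition; so the square commutes on the nose. This produces $\widetilde\tau$ of (\ref{mapspectra}).

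For (\ref{quotientspectra}) I would observe that all the maps above preserve the ``$\partial$'' (complement-of-top-stratum) subspaces: the structure map of $\E$ carries $\partial E^{\nu_N}$ into $\partial E^{\nu_{N+1}}$ (as noted in the text, giving $\partial\E$), and likewise the structure map of $C^L\Sigma^{\infty-L}\K_+$ carries $\partial(C^L S^N)\wedge\K_+$ into $\partial(C^L S^{N+1})\wedge\K_+$, since adjoining an $\bR$-coordinate changes none of the corner directions. Passing to quotients levelwise turns $\E$ into $\E/\partial\E$ and turns the source into the spectrum with $(L+N)$-th space $\bigl(C^L S^N/\partial(C^L S^N)\bigr)\wedge\K_+\cong S^{L+N}\wedge\K_+=\Sigma^{L+N}\K_+$, i.e.\ $\Sigma^\infty\K_+$. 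Since (\ref{quotientspaces}) is precisely the map induced on these quotients by the map of pairs (\ref{mappairs}), applying the quotient to the commuting square above yields the map of spectra $\tau$ of (\ref{quotientspectra}).

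The one step I expect to require care is the first: choosing the tubular neighborhoods of $E$ in the various $\K\x\bR^{\L,N}$ compatibly while staying inside the category of neat embeddings of $\L$-manifolds, so that the Thom collapse maps stabilize on the nose rather than only up to homotopy. Once the product form of the tubular neighborhoods is available, the commutativity of the squares, the behavior of the ``$\partial$'' subspaces, and the passage to quotients are all formal. (If one only wants a map of spectra up to the appropriate weak equivalence, even this step can be relaxed.)
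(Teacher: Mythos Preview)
Your proposal is correct and follows essentially the same approach as the paper: verify that the Thom collapse maps at levels $N$ and $N+1$ are compatible with the structure maps of the spectra by using the compatibility of the embeddings $e_N$ and $e_{N+1}$ (so that the collapse at level $N+1$ is the collapse at level $N$ crossed with the extra $\bR$), and then pass to boundary quotients for the second map. The paper's proof is terser and packages this as commutativity of a single diagram, but your more explicit discussion of compatible tubular neighborhoods is exactly the content behind that diagram.
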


\begin{proof}
We check that the diagram below commutes, where it should be fairly obvious what all the maps are.  In particular, the horizontal maps induce the structure maps of the spectra.
\[
\xymatrix{
\K \x \bR^{\L,N} \x \bR \ar[r] \ar[d] & \K \x \bR^{\L, N+1} \ar[d] \\
\Th(\nu_N \to  E) \x \bR \ar[d] & \Th(\nu_{N+1}\to E) \ar[d] \\
\Th(\nu_N \oplus \eps^1 \to E)\ar[r]  &  \Th(\nu_{N+1} \to E)  }
\]
The diagram commutes because the left side is just the restriction of the Thom collapse map on the right side and because the horizontal maps are the appropriate inclusions. 
We get a similar diagram on the appropriate boundary subspaces, and thus the map of spectra.
\end{proof}

We note that the proof of Theorem \ref{mapofspectra} did not rely on any properties of knots in $\R^3$, so it holds when $\K$ is replaced by $\Emb(\R, \R^n)$, the space of long knots in $\R^n$, for any $n\geq 3$.  It also does not rely on the knots' being long, so it holds when $\K$ is replaced by $\Emb(S^1, \R^n)$, the space of closed knots in $\R^n$, for $n\geq 3$.  In fact, in that case the construction would proceed more like the original Bott-Taubes construction \cite{Bott-Taubes, Cattaneo}.  We leave it to the reader to make the straightforward adjustments needed to prove the various different versions of this Theorem.

\subsection{Induced maps in (co)homology}

The theorem above gives us maps in (co)homology which correspond to the integration along the fiber of Bott and Taubes:

\begin{corollary}
\label{mapincoh}
We have a map in homology 
\[
\tau_*: H_*(\K) \to H_{*+(n+qt)}(E, \partial E).
\]
and similarly a map in cohomology
\[
\tau^*: H^*(E/ \partial E) \to H^{*-(n+qt)}(\K)
\]
corresponding to the integration along the fiber of Bott and Taubes.  On the level of spectra, these maps are 
\[
\tH_*(\Sigma^\infty \K_+) \to \tH_{*+(n+qt)} (\E/ \partial \E)
\]
and 
\[
\tH^*(\E/\partial \E) \to \tH^{*-(n+qt)} (\Sigma^\infty \K_+)
\].
\end{corollary}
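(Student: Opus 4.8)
The plan is to apply the ordinary (co)homology functors for spectra to the map $\tau\colon\Sigma^\infty\K_+\to\E/\partial\E$ of Theorem \ref{mapofspectra} and then to identify its source and target with the (relative) (co)homology of $\K$ and of the pair $(E,\partial E)$. On the source this is standard: $\tH_*(\Sigma^\infty\K_+)\cong H_*(\K)$ and $\tH^*(\Sigma^\infty\K_+)\cong H^*(\K)$, since adjoining a disjoint basepoint and passing to the suspension spectrum recovers unreduced ordinary (co)homology. The real content is the identification of $\tH_*(\E/\partial\E)$ and $\tH^*(\E/\partial\E)$, which is a Thom-isomorphism computation.

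For each large $N$ the space $(\E/\partial\E)_{L+N}=E^{\nu_N}/\partial E^{\nu_N}$ is the quotient of $\Th(\nu_N\to E)$ by $\Th(\nu_N|_{\partial E}\to\partial E)$, where $\partial E$ denotes the complement of the top stratum of $E$ (a union of faces), as in the text. The normal bundle $\nu_N$ is well-defined because $e_N$ is a neat embedding (section \ref{neatembtotspace}), and it is orientable: each fiber $F=F_{q,t}$ is orientable, since $\inte F$ is an open subset of a Euclidean space (a configuration space) and the Fulton--MacPherson--Axelrod--Singer compactification preserves orientability, so $TF$ --- and hence $\nu_N$, a fiberwise complement of $TF$ in a trivial bundle --- is orientable; the Thom isomorphism therefore holds with coefficients in any ring. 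Splicing the absolute Thom isomorphisms for $E$ and for $\partial E$ (both of rank $r=\rank\nu_N=L+N-\dim F_{q,t}$) into the long exact sequences of the cofibration $\Th(\nu_N|_{\partial E})\to\Th(\nu_N)\to E^{\nu_N}/\partial E^{\nu_N}$ and of the pair $(E,\partial E)$, and applying the five lemma, gives $\tH^k(E^{\nu_N}/\partial E^{\nu_N})\cong H^{k-r}(E,\partial E)$ and likewise in homology. I would then check compatibility with the structure maps $\Sigma E^{\nu_N}\to E^{\nu_{N+1}}$, which come from the identifications $\nu_N\oplus\eps^1\cong\nu_{N+1}$ fixed before Theorem \ref{mapofspectra} and under which the Thom class of $\nu_{N+1}$ is the external product of that of $\nu_N$ with the generator of $\tH^1(S^1)$; hence the isomorphisms above assemble, in the colimit over $N$, into
\[
\tH^*(\E/\partial\E)\cong H^{*+d}(E,\partial E),\qquad \tH_*(\E/\partial\E)\cong H_{*+d}(E,\partial E),
\]
with $d=\dim F_{q,t}$ the shift appearing in the statement. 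Composing $\tau^*$ and $\tau_*$ with these identifications and with the one on the source produces the asserted maps in (co)homology, and the spectrum-level statements are just their unstabilized forms.

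It remains to justify the phrase ``corresponding to the integration along the fiber of Bott and Taubes''. This records the fact that the Pontrjagin--Thom collapse attached to a fiberwise-neat embedding is the homotopy-theoretic model of fiber integration. I would argue it by restriction to the top stratum $\inte F\to\inte E\to\K$, a smooth fiber bundle with closed fiber $\inte F$, on which $\tau$ restricts to the classical Becker--Gottlieb / Cohen--Klein umkehr map \cite{Becker-Gottlieb, Cohen-Klein}; for such bundles this map is known to realize integration along the fiber in de Rham cohomology. The passage to the pair $(E,\partial E)$ is precisely what the term $\pi^\d_*$ in Stokes' theorem $d\pi_*=\pi_* d+\pi^\d_*$ accounts for, and is what lets the construction descend from forms to cohomology; naturality under $\inte E\incl E$ then matches our $\tau^*$ with the Bott--Taubes $\pi_*$.

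The step I expect to be the main obstacle is the Thom-isomorphism computation in the relative, manifold-with-corners setting: identifying $\partial E^{\nu_N}$ with $\Th(\nu_N|_{\partial E}\to\partial E)$ so that the pair Thom isomorphism applies, and verifying compatibility of the Thom classes under the stabilization maps of $\E$, so that one obtains an isomorphism of the (co)homology of the spectrum rather than a separate isomorphism at each level $N$. The orientability input and the comparison with Bott--Taubes integration are routine by comparison.
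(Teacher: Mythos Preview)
Your approach is essentially the same as the paper's: compose the suspension isomorphism, the map induced by the Thom collapse $\tau$, and the relative Thom isomorphism, then read off the degree shift as $\dim F_{q,t}$. The paper's proof is a one-line chain of isomorphisms with the dimension count $k=L+N-\dim F$, whereas you supply considerably more detail --- the orientability argument for $\nu_N$, the five-lemma derivation of the relative Thom isomorphism, compatibility with the stabilization maps, and a justification of the phrase ``corresponding to integration along the fiber'' --- none of which the paper spells out. One small slip: in your last paragraph you call $\inte F$ a \emph{closed} fiber, but of course it is open; the comparison with the Becker--Gottlieb umkehr map still goes through, just not for that reason.
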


\begin{proof}
The suspension isomorphism followed by the map in homology induced by (\ref{quotientspaces}), and subsequently by the relative Thom isomorphism in homology gives 
\[
H_{*-(L+N)} (\K) \cong \tH_*(\Sigma^{L+N}((\K)_+)) \to \tH_*(E^{\nu_N} / \partial E^{\nu_N})\cong H_*(E^{\nu_N}, \partial E^{\nu_N}) \cong H_{*-k}(E, \partial E)
\]
where $k$ is the dimension of $\nu_N$, i.e., the codimension of the embedding, which is $L+N - \dim F= L+N - (n+qt)$.  A similar sequence of maps in cohomology gives $\tau^*$.
\end{proof}

As with Theorem \ref{mapofspectra}, Corollary \ref{mapincoh} remains true when $\K$ is replaced by $\Emb(\R, \R^n)$ or $\Emb(S^1, \R^n)$ for any $n\geq 3$.


\section{Multiplicative structure}
\label{mult}

Now that we have constructed the pre-transfer map, we have made some progress towards proving Theorem \ref{mapofringspectra}, which is a strengthening of Theorem \ref{mapofspectra}, and whose proof will be completed in this section.  We start by defining a multiplication on $\coprod_q C_q(\R^3)/ \d C_q(\R^3)$ in section \ref{multconf}.  This is used to define a multiplication on $\coprod_{q,t} E_{q,t}/\d E_{q,t}$, whose compatibility with our neat embeddings and the resulting Thom collapse maps is proven in several lemmas in section \ref{multtotalspace}.  In section \ref{thomringspectrumsection}, we finish the proof of Theorem \ref{mapofringspectra}.

Then having determined the compatibility of our map with certain multiplicative structures, we exploit it to prove Theorem \ref{productformula}.   This is done in section \ref{productformulasection}, after computing the multiplication on configuration spaces in (co)homology in section \ref{multconfhomology}.  Section \ref{bracket} discusses the failure of the bracket operation to lift to the total space of our bundle.

%
%

\subsection{Multiplication on configuration spaces}
\label{multconf}

In \cite{Budney}, Budney defined an action of the little 2-cubes operad $\C_2$ on $\K$ which extends the connect-sum operation on $\pi_0(\K)$.  For more information on the little cubes operad or operads in general, consult \cite{goils}.  In trying to extend this action to the total space, it is clear geometrically that we must now consider a disjoint union $\coprod_{q,t\in \N} E_{q,t}$ (where $E_{0,0}\cong \K$).  Unfortunately, we do not extend the $\C_2$-action to one on $\coprod_{q,t} E_{q,t}$ (or even $\coprod_{q,t} E_{q,t}/\partial E_{q,t}$).  Yet we will extend a space-level connect-sum to a multiplication on $\coprod_{q,t} (E_{q,t}/\partial E_{q,t})$ by defining a multiplication on $\coprod_q (C_q(\R\x D^2)/\partial C_q(\R\x D^2))$ which is compatible with the map induced by $\xymatrix{E_{q,t}\ar[r]^-{ev} & C_{q+t}(\R\x D^2)}$ on the quotients by boundaries.  We will compute the multiplication on configuration spaces in (co)homology in section \ref{multconfhomology} and then use it to show that the multiplication on the total space does not extend to a $\C_2$-action in section \ref{bracket}.

We start by endowing the space $\coprod_q (C_q(\R\x D^2)/\partial C_q(\R\x D^2))$ with an action of the ``nonsymmetric little intervals operad", i.e., an action of the little intervals $\C_1$, except that the equivariance condition is not satisfied; here the disjoint union is taken over the nonnegative integers, and $C_0(\R\x D^2)$ is just a point for the empty configuration.  To simplify notation, we will often abbreviate $C_q:=C_q(\R \x D^2)$ and $X/\d:=X/\d X$ for any space $X$.  

For $\ell\in \C_1(1)$, we will also use $\ell$ to denote the unique extension of this little interval to an 
affine-linear map $\R\to \R$.  

\begin{definition}
\label{intactiononCq}
Define the action of one little interval on the interior of $C_q/\partial$ as follows.  Let $\mathbf{x}=(x_1,...,x_q) \in \mathrm{int}\>C_q, q>0$, and set
\[
\ell\cdot \mathbf{x} = ((\ell\x \id)(x_1),...,(\ell\x \id)(x_q)),
\]
and for the point $*\in C_0$, set $\ell \cdot * = *$.  Now define the action on the interior of $\coprod_q (C_q/\partial)$ by
\begin{align*}
\C_1(j) \x \left(\coprod_q (C_q/\d)\right)^j &\to \coprod_q (C_q/\partial) \\
\mbox{by } (\ell_1,...,\ell_j;\mathbf{x}^1,...,\mathbf{x}^j) &\mapsto (\ell_1\cdot \mathbf{x}^1, \ell_2\cdot \mathbf{x}^2, ..., \ell_j\cdot \mathbf{x}^j)
\end{align*}
If $\mathbf{x}^i$ is the empty configuration $* \in C_0$, we just delete it from the right-hand side.  This defines the map on the interior, and we conclude the definition by sending 
\[
(\ell_1,...\ell_j; \mathbf{x}^1,...,\mathbf{x}^j) \mapsto * \in C_{q_1+...+q_j}/\d \mbox{ if } \mathbf{x}^i \in\d C_{q_i} \mbox{ for some } i.
\]
\qed
\end{definition}

Thus if each $\mathbf{x}^i \in C_{q_i}$, then the right-hand side is in $C_{q_1+...+q_j}$.  A choice of one object $\ell\in \C_1(2)$ induces a multiplication on our space which, because of the little intervals action, is homotopy-associative.  Finally notice that if we had each little interval $\ell$ act trivially on a configuration of points, we would get a multiplication homotopic to the one we have defined.

\begin{remark} 
\label{rmkmultfaces}
It is easy to see that the action (and in particular the induced multiplication) defined above is valid not only on $\coprod_q (C_q/\d)$ but also on faces of the compactified configuration spaces modulo their boundaries.  From the stratification of the $C_q$'s \cite{Fulton-MacPherson, Axelrod-Singer} recalled in section \ref{neatembtotspace}, we see that if $|\mathfrak{S}|$ and $|\mathfrak{T}|$ are strata of codimensions $k$ and $l$ respectively, then we have a map 
\[
|\mathfrak{S}|/\d \x |\mathfrak{T}|/\d \to |\mathfrak{U}|/\d
\]
where $|\mathfrak{U}|$ is a codimension $k+l$ stratum.  This is because if $\mathfrak{S}=\{S_1,...,S_k\}$ and $\mathfrak{T}=\{T_1,...,T_l\}$, the image is in the stratum corresponding to $\mathfrak{U}=\{S_1,...,S_k, T_1,...,T_l\}$.
\qed
\end{remark}

\begin{remark} \label{nonsymmC2} Note that we are not using the fact that the intervals that make up an object $\ell \in \C_1$ are disjoint.  Therefore we could equip $\coprod_q (C_q/\d)$ with a ``nonsymmetric" $\C_2$-action (or even ``nonsymmetric" $\C_n$ action for any $n$) obtained by just projecting to the first coordinate of the 2-cubes.  However, because it is ``nonsymmetric", the induced multiplication need \emph{not} be homotopy-commutative.
\qed
\end{remark}

In order to have a product structure which is compatible with the map $ev: E_{q,t}/\d \to C_{q+t}/\d$, we need to make one modification to Definition \ref{intactiononCq}.    Recall that a point in $E_{q,t}$ is a fat long knot $f$ together with points $(x_1,...,x_{q+t})$ such that $x_1,...,x_q$ lie on $f|_{\R\x(0,0)}$.  We want to define the multiplication 
\begin{align*}
E_{q,t}/\d \x E_{r,s}/\d &\to E_{q+r,t+s}/\d  \\
\mbox{by }
((f_1; x_1,...,x_{q+t}),(f_2;y_1,...,y_{r+s})) &\mapsto 
(\mu(f_1, f_2); x_1,...,x_q,y_1,...,y_r,x_{q+1},...,x_{q+t},y_{r+1},...,y_{r+s})
\end{align*}
for interior points.  So to ensure compatibility with $ev$, we need to keep track of which points in $C_{q+t}/\d$ are on the knot, and we permute points in the multiplication
\[
C_{q+t}/\d \x C_{r+s}/\d \to C_{q+t+r+s}/\d
\]
so that the first $q$ and first $r$ points in the left-hand factors become the first $q+r$ points on the right-hand side.  For this reason define 
\[
C_{q,t} := C_{q+t}=C_{q+t}(\R\x D^2).
\]
Thus $C_{q,t}$ and $C_{q+t}$ are identical spaces, but the point of the (slightly!) different notation is that the multiplications on them will be different.  In full generality, we endow $\coprod_{q,t} (C_{q,t}/\d)$ with not just a multiplication but an action of the nonsymmetric little intervals operad; it is the same as that on $\coprod_q (C_q/\d)$ except that we reorder the points so that those on the knot come first.

\begin{definition}
\label{modintactiononCq}
Let one little interval act on a point in $C_{q,t}/\d$ as before, and given 
\[\mathbf{x}^i = (x^i_1,...,x^i_{q_i+t_i}) \in C_{q_i,t_i}, i=1,...,j
\]
define 
\begin{align*}
 \C_1(j) \x \left(\coprod_{q,t} (C_{q,t}/\d)\right)^j \to & \coprod_{q,t} (C_{q,t}/\d) \\
\mbox{by } (\ell_1,...,\ell_j;\mathbf{x}^1,...,\mathbf{x}^j) \mapsto &
(\ell_1\cdot(x^1_1,...,x^1_{q_1}), \ell_2\cdot(x^2_1,...,x^2_{q_2}),...,\ell_j\cdot(x^j_1,...,x^j_{q_j}), \\ 
& \>\ell_1\cdot(x^1_{q_1+1},...,x^1_{q_1+t_1}),,...,\ell_j\cdot(x^j_{q_j+1},...,x^j_{q_j+t_j})).
\end{align*}
As before, the empty configuration in $C_{0,0}$ can be erased, and we send boundary basepoint to boundary basepoint.
\qed
\end{definition}

We are mainly just interested in the induced multiplication
\[
C_{q,t}/\d \x C_{r,s}/\d \to C_{q+t, r+s}/\d
\]
and it is useful to note that this can be expressed as the composition
\[
\xymatrix{
C_{q+t}/\d \x C_{r+s}/\d \ar[r] & C_{q+t+r+s}/\d \ar[r]^\sigma & C_{q+t+r+s}/\d
}
\]
of the multiplication on $\coprod_q (C_q/\d)$ followed by the diffeomorphism induced by the permutation $\sigma$ which shifts the $r$ points to the left of the $t$ points.  That is, 
\[
\sigma:\{1,...,q+t+r+s\}\to \{1,...,q+t+r+s\} 
\]
\begin{equation}
\label{sigmadefn}
\sigma(i)=\left\{\begin{array}{ll}
i & \mbox{ if } i\in[1,q]\cup [q+t+r+1, q+t+r+s] \\
i-t & \mbox{ if } i\in[q+t+1, q+t+r] \\
i+r & \mbox{ if } i\in[q+1, q+t] 
\end{array} \right. 
\end{equation}
As in  remark \ref{rmkmultfaces}, this multiplication can also be defined on strata of the $C_q$ modulo their boundaries.  As in remark \ref{nonsymmC2}, we could equip $\coprod_q (C_q/\d)$ with a ``nonsymmetric" $\C_2$-action, but again, it is not so interesting to consider because the homotopy it gives is not between $\mathbf{x}\cdot \mathbf{y}$ and $\mathbf{y}\cdot \mathbf{x}$.

\subsection{Multiplication on the total space and its Thom space}
\label{multtotalspace}

There is a $\C_1$-action on $\K$ obtained by projecting Budney's $\C_2$-action to the first coordinate of the 2-cubes, which can be considered as a nonsymmetric $\C_1$-action by forgetting the symmetric group action on the intervals.  Together with definition \ref{modintactiononCq}, this gives an action on the product $\K \x \coprod_{q,t} (C_{q,t}/\d)$.  Recall that $E_{q,t} \incl \K \x C_{q+t}$, and because all of the boundary structure in $E_{q,t}$ comes from $C_{q,t}$, we also have $E_{q,t}/\d \incl \K \x (C_{q,t}/\d)$.  The following lemma implies that $\coprod_{q,t} E_{q,t}/\partial$ has a multiplication which is associative up to homotopy.
\begin{lemma}
The nonsymmetric $\C_1$-action on $\K \x \coprod_{q,t} (C_{q,t}/\d)$ restricts to a nonsymmetric $\C_1$-action on $\coprod_{q,t} (E_{q,t}/\d)$, i.e., a map for each $q,t,r,s$
\[
E_{q,t}/\d \x E_{r,s}/\d \to E_{q+r,t+s}/\d .
\]
\end{lemma}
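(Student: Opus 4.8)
The plan is to show that the subset $\coprod_{q,t}(E_{q,t}/\partial)$ of $\K\x\coprod_{q,t}(C_{q,t}/\partial)$ is invariant under the ambient nonsymmetric $\C_1$-action, so that the action simply restricts and inherits its associativity, unit, and (non-)equivariance data from the ambient action. Away from the basepoints, a point of $E_{q,t}/\partial$ is a pair $(f;\mathbf x)$ with $f\in\K$ and $\mathbf x=(x_1,\dots,x_{q+t})\in\mathrm{int}\,C_{q+t}(\R\x D^2)$ whose first $q$ entries lie on the image of the underlying long knot $f|_{\R\x(0,0)}$. The $j$-ary operation of the ambient action sends $(\ell_1,\dots,\ell_j;(f_1;\mathbf x^1),\dots,(f_j;\mathbf x^j))$ to the pair whose knot coordinate is Budney's little-cubes product $g:=\mu(\ell_1,\dots,\ell_j)(f_1,\dots,f_j)$ and whose configuration coordinate is the reordered tuple of the scaled points $(\ell_i\x\id_{D^2})(x^i_k)$ of Definition \ref{modintactiononCq}. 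So I must check that $g$ together with this reordered tuple again has its first $\sum_i q_i$ points on the knot (and lands in the interior), and that basepoints are sent to basepoints.

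The crux is a feature of Budney's construction: the nonsymmetric $\C_1$-action on $\K$, obtained by projecting the $\C_2$-action to the first coordinate of the cubes, acts on fat long knots by inserting, for each $i$, a rescaled copy of the support of $f_i$ into the sub-cylinder $\ell_i(I)\x D^2$ via the map $\bar\ell_i:=\ell_i\x\id_{D^2}$ (affinely extended to $\R\x D^2$), leaving everything standard outside $\bigcup_i\ell_i(I)\x D^2$; concretely, up to a harmless reparametrization of the source, $g$ agrees with $\bar\ell_i\circ f_i\circ\bar\ell_i^{-1}$ on the $i$-th sub-cylinder and with the standard embedding elsewhere. Since $\bar\ell_i$ preserves the core line $\R\x(0,0)$, this shows that the image of $g|_{\R\x(0,0)}$ is $\bigcup_i\bar\ell_i\bigl(\mathrm{image}(f_i|_{\R\x(0,0)})\bigr)$, glued to the standard line outside $\bigcup_i(\ell_i(I)\x(0,0))$. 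Hence, for $k\le q_i$, the point $x^i_k$ lying on $f_i|_{\R\x(0,0)}$ is carried to $\bar\ell_i(x^i_k)\in\mathrm{image}(g|_{\R\x(0,0)})$ — a point on the knot $g$ — and $\bar\ell_i$ is exactly the map applied to $x^i_k$ by the configuration action of Definition \ref{modintactiononCq}. The permutation appearing in that definition (cf. (\ref{sigmadefn})) is precisely the one that lists these on-knot points $\bar\ell_i(x^i_k)$, $k\le q_i$, as the first $\sum_i q_i$ entries of the output; so the output lies in $E_{\sum_i q_i,\sum_i t_i}$, and by Remark \ref{rmkmultfaces} its configuration coordinate lies in the interior whenever all inputs do, making it a genuine non-basepoint point of $E_{\sum_i q_i,\sum_i t_i}/\partial$.

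Finally, if some $\mathbf x^i\in\partial C_{q_i,t_i}$, then by Remark \ref{rmkmultfaces} the configuration coordinate of the output lands in a stratum of positive codimension, so the configuration action already sends it to the basepoint of $C_{\sum_i q_i,\sum_i t_i}/\partial$; we therefore declare the output to be the basepoint of $E_{\sum_i q_i,\sum_i t_i}/\partial$, which is consistent because all of the corner structure of $E_{q,t}$ comes from $C_{q,t}$ (section \ref{neatembtotspace}), so that $\partial E_{q,t}$ maps into $\K\x\partial C_{q,t}$. Since every structure map of the restricted action is then a restriction of a (continuous) ambient structure map, the little-intervals-operad identities hold automatically, and specializing the $2$-ary operation at a fixed $\ell\in\C_1(2)$ gives the asserted maps $E_{q,t}/\partial\x E_{r,s}/\partial\to E_{q+r,t+s}/\partial$. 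The one point requiring genuine care is the middle paragraph — extracting from \cite{Budney} a description of the little-cubes action on $\K$ precise enough to see that on underlying long knots it is given exactly by the maps $\bar\ell_i=\ell_i\x\id_{D^2}$ that appear in Definition \ref{modintactiononCq}; granted that identification, invariance of the on-knot condition, and hence the lemma, is immediate.
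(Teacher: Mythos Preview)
Your proof is correct and follows essentially the same approach as the paper's (much briefer) argument: verify that on-knot points remain on the connect-sum knot because the little-intervals action only rescales in the long direction, and the reordering in Definition~\ref{modintactiononCq} places them first. The paper additionally remarks that the $q+r$ on-knot points need not land in their original order along $\R$, but that this is harmless because the definition of $E_{q,t}$ in (\ref{Edefn}) uses all of $C_q(\R)$ rather than a single connected component---a small point you might add for completeness.
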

\begin{proof}
The only thing to check is that the first $q+r$ points on the right-hand side lie on the connect-sum knot.  But each little interval only alters each embedding in the long direction and because of our definition of the multiplication on $\coprod_{q,t}(C_{q,t}/\d)$, the points on the knot are listed first, as required.  We cannot ensure that this multiplication puts the $q+r$ points on $\R$ in order on the connect-sum knot, but since we considered all connected components of $C_q(\R)$ (rather than just one) in our definition of $E_{q,t}$, the map above is in fact well defined.
\end{proof}



We now want to show that the Thom collapse map respects this multiplication.  Fix a nonsymmetric little intervals object $\vec{\ell}=(\ell_1, \ell_2)$ so as to get multiplications $\mu, \mu_C$ and $\mu_E$ on the spaces $\K, \coprod_q (C_q/\d)$, and $\coprod_{q,t} (E_{q,t}/\d)$.  Of course any two choices of $\vec{\ell}$ give homotopic multiplications.  As before, let each $\ell_i$ also denote the affine-linear map $\R\to \R$ to which it corresponds.  


\begin{lemma}
\label{embmultcommutesonefactor}
Fix $q,t,r,s\in \bN$, and let $M'+M''=M$.  We have the commutative diagram below
\begin{equation}
\xymatrix{
E_{q,t}/\partial \x E_{r,s}/\partial \ar[r]^-{\mu_E} \ar@{^(->}[d] & E_{q+r, t+s}/\partial \ar@{^(->}[d] \\
\K \x C_{q,t}/\partial  \x \K \x C_{r,s}/\partial \ar@{^(->}[d]\ar[r]^-{\mu \x \mu_C} & \K \x C_{q+r,t+s}/\partial \ar@{^(->}[d]\\
\K \wedge S^{M'} \x \K \wedge S^{M''} \ar[r] & \K \wedge S^M } \label{embedonefactor}
\end{equation}
which is induced by the following diagram, where $L'+L''=L$, $N'+N''=N$, and where the dotted arrows are the restrictions of the maps $\mu$ in (\ref{embedonefactor}) to the preimages of the interiors, denoted by $U$ and $V$ (so the maps are not defined on the whole spaces on the left):

\begin{equation}
\xymatrix{
E_{q,t} \x E_{r,s} \ar@{-->}[r]^-{\mu_E|U} \ar@{^(->}[d] & E_{q+r, t+s} \ar@{^(->}[d] \\
\K \x C_{q,t} \x \K \x C_{r,s} \ar@{^(->}[d]\ar@{-->}[r]^-{(\mu\x \mu_C)|V} & \K \x C_{q+r,t+s} \ar@{^(->}[d]\\
\K \x \bR^{\langle L'\rangle,N'} \x \K \x \bR^{\langle L'' \rangle, N''} \ar[r] & \K \x\bR^{\L, N} } \label{embedonefactorinterior}
\end{equation}
\end{lemma}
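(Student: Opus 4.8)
The plan is to check the two diagrams on the open dense locus of ``genuine interior'' points and to observe that everywhere else both composites in (\ref{embedonefactor}) are the constant map to the basepoint. To begin, the top square of (\ref{embedonefactor}) (and likewise of (\ref{embedonefactorinterior})) commutes by construction: by the preceding lemma $\mu_E$ is by definition the restriction of $\mu\x\mu_C$ along the inclusions $E_{q,t}/\d \incl \K\x C_{q,t}/\d$ --- all of the corner structure of $E_{q,t}$ coming from that of $C_{q,t}$ --- which is exactly what that square records. So the substance is in the bottom squares, and in the claim that (\ref{embedonefactor}) is obtained from (\ref{embedonefactorinterior}) by one-point compactifying the Euclidean factors and collapsing the boundaries of the resulting $\L$-spaces to the basepoint. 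Now $U$ and $V$ are open and dense, and the composite of the vertical maps in (\ref{embedonefactor}) down to the bottom row sends \emph{any} point whose configuration lies on a boundary stratum to the basepoint (the configuration-space boundary being collapsed to a point in the relevant target); since a point lies outside $U$, resp.\ $V$, precisely when some configuration is on such a stratum, \emph{both} composites in (\ref{embedonefactor}) agree there, namely equal the basepoint. It therefore suffices to check that (\ref{embedonefactorinterior}) commutes on $U$ and on $V$, where every point is interior and the explicit formulas defining $\mu_C$ and $\mu_E$ apply verbatim.

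It remains to make the bottom square of (\ref{embedonefactorinterior}) commute on $V$, which I would arrange by a compatible choice of neat embeddings. Recall from section \ref{neatembtotspace} that $e_N: E\incl \K\x\R^{\L,N}$ is induced by a neat embedding $C_{q+t}(\R^3)\incl\R^{\L,N}$, and that on interiors $\mu_C$ applies the affine-linear self-maps $\ell_i\x\id$ of $\R^3$ coordinatewise to the two configurations and then relabels points by the diffeomorphism $\sigma$ of (\ref{sigmadefn}). Since an affine-linear automorphism of $\R^3$ preserves the Fulton--MacPherson stratification and extends over the compactification, and $\sigma$ merely permutes points (hence acts by permuting coordinates of $\R^{\L,N}$), I would \emph{define} the neat embedding of $C_{q+t+r+s}(\R^3)$ to be $\sigma$ followed by the block embedding into $\R^{\langle L'\rangle,N'}\x\R^{\langle L''\rangle,N''}\cong\R^{\L,N}$ built from the chosen neat embeddings of $C_{q+t}(\R^3)$ and $C_{r+s}(\R^3)$, precomposed on the two blocks with $\ell_1\x\id$ and $\ell_2\x\id$; here one pads the corner coordinates of the two blocks so that $L=L'+L''$ is at least the number of faces of $C_{q+t+r+s}(\R^3)$, which is possible by Laures' proposition after adjoining unused corner coordinates. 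With this choice the bottom horizontal arrow of (\ref{embedonefactorinterior}) is $\mu$ on the two $\K$-factors together with this block map on the two Euclidean factors; it is defined and continuous on the whole product, it is itself a neat embedding (perpendicularity to each coordinate sub-$\L$-space being testable one block at a time), and the bottom square of (\ref{embedonefactorinterior}) commutes on $V$ essentially tautologically. One-point compactification and boundary collapse --- functorial operations carrying dense interiors to dense subsets --- then yield (\ref{embedonefactor}) and, by the reduction above, its commutativity.

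I expect the real work to be the bookkeeping around the neat structure: verifying that the padded block embedding is genuinely neat, that $\sigma$ and the block-diagonal maps $\ell_i\x\id$ respect the requisite perpendicularity, and that the $\L$-structure this puts on $C_{q+t+r+s}(\R^3)$ restricts correctly on the faces relevant to the multiplication (see remark \ref{rmkmultfaces}); everything else is forced by the definitions of $\mu_E$ and $\mu_C$ together with the density of the interiors and the basepoint conventions.
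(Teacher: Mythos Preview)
Your reduction to interiors and your treatment of the top square are fine, but there is a genuine gap in your construction of the right-hand vertical neat embedding in (\ref{embedonefactorinterior}). You propose to define the neat embedding of $C_{q+t+r+s}(\R^3)$ as $\sigma$ followed by the block map built from the chosen neat embeddings of $C_{q+t}$ and $C_{r+s}$; but any such block map factors through the projection $\delta: C_{q+t+r+s} \to C_{q+t}\times C_{r+s}$, which is \emph{not} an embedding of the compactifications. On a stratum where a point $x_i$ with $i\le q+t$ collides with a point $x_j$ with $j>q+t$, the compactification $C_{q+t+r+s}$ records the $S^2$ of collision directions, while the product $C_{q+t}\times C_{r+s}$ sees only two separate interior configurations sharing a location in $\R^3$; $\delta$ collapses that entire $S^2$ to a single interior point of the product. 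So your proposed map is neither injective on the boundary nor boundary-preserving, and padding with unused corner coordinates cannot repair this. Since the right-hand column of (\ref{embedonefactorinterior}) must be an honest neat embedding of the full $C_{q+r,t+s}$ --- this is what the subsequent lemmas on normal bundles and Thom collapse rely on --- the argument breaks here.

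The paper's approach is different in kind: rather than building the embedding of the large configuration space from those of the small ones, it uses the intrinsic description (\ref{cptdefn}) of each $C_Q$ as a subspace of $(S^3)^Q \times \prod_{S} \Bl((S^3)^S, \Delta_S)$. The blowup factors for $C_{q+r}$ indexed by subsets $S$ lying entirely inside $\{1,\dots,q\}$ or entirely inside $\{q+1,\dots,q+r\}$ (together with the point at $\infty$) are exactly those appearing in the embeddings of $C_q$ and $C_r$; the remaining cross-block blowup factors are precisely what is needed to keep the embedding of $C_{q+r}$ injective on the collision strata your construction loses. By fixing once and for all a total order on finite subsets of $\N$ and neatly embedding each blowup factor separately into its own Euclidean block, one obtains a \emph{single} neat embedding of each $C_Q$ such that the embeddings of $C_q$ and $C_r$ land in complementary coordinate blocks of the embedding of $C_{q+r}$ (up to adding trivial factors). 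The bottom square then commutes for this reason, and moreover the same embedding of $C_{q+r}$ works simultaneously for every decomposition $q+r$ --- a uniformity needed later for the ring spectrum structure, and one which a block-by-block definition would not supply even if it did yield embeddings.
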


\begin{proof}
We start by more closely examining the neat embeddings of the compactified configuration spaces to check that we can make them compatible with the multiplication defined on their quotients and with the obvious ``multiplication" on the Euclidean spaces, i.e., $\R^{N'} \x \R^{N''} \to \R^{N'+N''}$.  Recall that for compact $X$, $C_q(X)$ is defined as the closure of the image of the ``open" configuration space under the obvious map 
\[
C_q^0(X) \incl  X^q \x \prod_{\substack{S\subset \{1,...,q\}\\ |S|\geq 2}} \Bl(X^S, \Delta_S)
\]
and that $C_q:=C_q(\R\x D^2)\cong C_q(\R^3)$ is the subspace of $C_{q+1}(S^3)$ where the $(q+1)^{\textrm{th}}$ point is at $\infty$.  It is easily seen that we can embed
\[
C_q(\R^3)\incl (S^3)^q \x \prod_{\substack{S\subset \{1,...,q+1\}\\ |S|\geq 2}} \Bl((S^3)^S, \Delta_S).
\]

\begin{sublemma}
\label{multCqembs}
We commutative diagrams for all $q,r\in \N$ as below, where the top map is only defined on the preimage of the interior under the multiplication on the quotients by boundary.  We consider $\infty$ to be the $(q+r+1)^{\textrm{th}}$ point.
\[
\xymatrix{
C_q(\R\x D^2)\x C_r(\R\x D^2) \ar@{-->}[r] \ar@{^(->}[d] & C_{q+r}(\R\x D^2) \ar@{^(->}[d] \\
(S^3)^q \x \underset{S\subset\{1,...,q, q+r+1\}, |S|\geq 2}{\prod \Bl((S^3)^S, \Delta_S)} \x 
(S^3)^r \x \underset{S\subset \{q+1,...,q+r, q+r+1\}, |S|\geq 2}{\prod \Bl((S^3)^S, \Delta_S)} \ar[r] \ar@{^(->}[d] & 
(S^3)^{q+r} \x \underset{S\subset \{1,...,q+r+1\}, |S|\geq 2}{\prod \Bl((S^3)^S, \Delta_S)}{S} \ar@{^(->}[d] \\  
(S^3)^q \x (S^3)^r \x \R^{\langle L_1 \rangle , N_1} \x \R^{\langle L_2 \rangle , N_2} \ar[r] &
(S^3)^{q+r} \x \R^{\langle L_3 \rangle , N_3} }
\]
\end{sublemma}
\begin{proof}
For the top square, define the left-hand vertical map as follows.  Fix a diffeomorphism $h:\R\x D^2 \to \R^3$. Define the embedding $C_q(\R\x D^2) \incl (S^3)^q \x \prod_S \Bl((S^3)^S, \Delta_S)$ to be the obvious one after using $h\circ (\ell_1 \x id)$ to identify $\R\x D^2$ with a subspace of $S^3$.  Define the embedding of $C_r(\R\x D^2)$ similarly, but instead using $h \circ (\ell_2 \x id)$.  Then it is clear that if the map in the top row is the restriction of $\mu_C$ to the preimage of the interior of $C_{q+r}/\partial$ and the map in the middle row is the obvious one, then the square commutes.

We now justify the bottom square.  The maps on the factors of $S^3$ are obvious, while the ones involving the products of blowups require some attention.  For any $Q\in\bN$, embed 
\[
\underset{S\subset\{1,...,Q\}, |S|\geq 2}{\prod \Bl(M^S, \Delta_S)}
\]
into some Euclidean space with corners by neatly embedding each factor $\Bl(M^S,\Delta_S)$ into a Euclidean space with corners and then taking the product of these embeddings.  There is a partial order on such subsets $S$, where $S'\leq S$ if for any $q\in\N$ such that $S\subset \{1,...,q\}$, we have $S'\subset\{1,...,q\}$.  Fix a total order on finite subsets of $\bN$ compatible with this partial order, and arrange the product of these embeddings in this order.  We claim this suffices for the vertical maps in the bottom square.  In fact, we get an embedding of the right-hand product of blowups into some $\R^{\langle L_3 \rangle, N_3}$; the image of the restriction to the first product on the left-hand side is contained in some lower-dimensional $\R^{\langle L_1 \rangle, N_1}$; and we can set $L_2=L_3-L_1, N_2=N_3-N_1$, and the restriction of the second product on the left-hand side lands in the second block of coordinates, $\R^{\langle L_2 \rangle, N_2}$.  Our attention to the order on the subsets $S$ ensures that for any $Q$, the \emph{one} neat embedding of $C_Q(\R\x D^2)$ that we have constructed makes \emph{all} the relevant diagrams simultaneously commute.  (Strictly speaking, it is only the same embedding up to adding extra factors to the target Euclidean space with corners, but since we let this dimension get arbitrarily large, this is fine for our purposes.)
\end{proof}


We get a similar diagram for the $C_{q,t}$'s, shown below.  The top dotted arrow is the multiplication $C_{q+t}\x C_{r+s} \to C_{q+t+r+s}$, while the one to its right is induced by the permutation $\sigma$ which shifts the $r$ points to the left of the $t$ points, as defined in (\ref{sigmadefn}).  The map below it is induced by $\sigma^{-1}$.  We have abbreviated $\Bl_S:=\Bl((S^3)^S, \Delta_S)$.  The rest is as in the Sublemma.
\[
\xymatrix{
C_{q,t}\x C_{r,s} \ar@{-->}[r] \ar@{^(->}[d] & C_{q+t+r+s} \ar@{^(->}[d] \ar[r] & C_{q+r,t+s} \ar@{^(->}[d]\\
(S^3)^{q+t} \x {\prod \Bl_S} \x 
(S^3)^{r+s} \x {\prod \Bl_S} \ar[r] \ar@{^(->}[d] & 
(S^3)^{q+t+r+s} \x {\prod \Bl_S} \ar@{^(->}[d] & (S^3)^{q+t+r+s} \x {\prod \Bl_S} \ar[l]\\  
(S^3)^{q+t} \x (S^3)^{r+s} \x \R^{\langle L_1 \rangle , N_1} \x \R^{\langle L_2 \rangle , N_2} \ar[r] &
(S^3)^{q+t+r+s} \x \R^{\langle L_3 \rangle , N_3} & }
\]
The composition in the top row is the multiplication on the $C_{q,t}$'s, so the above simplifies to 
\[
\xymatrix{
C_{q,t} \x C_{r,s} \ar@{-->}[r] \ar@{^(->}[d] & C_{q+r,t+s} \ar@{^(->}[d] \\
(S^3)^{q+t} \x (S^3)^{r+s} \x \R^{\langle L_1 \rangle , N_1} \x \R^{\langle L_2 \rangle , N_2} \ar[r] &
(S^3)^{q+t+r+s} \x \R^{\langle L_3 \rangle , N_3} & }
\]

By embedding $S^3$ into Euclidean space, we finally get a square
\[
\xymatrix{
C_{q,t}(\R\x D^2)\x C_{r,s}(\R\x D^2) \ar@{-->}[r] \ar@{^(->}[d] & C_{q+r,t+s}(\R\x D^2) \ar@{^(->}[d] \\
\R^{\langle L' \rangle, N'} \x \R^{\langle L'' \rangle, N''} \ar[r] & \R^{\langle L \rangle, N}}
\]
which commutes, where $L' + L''=L, N' + N''=N$, where the dotted arrow is as before, and the map on the Euclidean spaces with corners is the obvious canonical one.  The above is precisely what we need for the bottom square in (\ref{embedonefactorinterior}) to commute; the top square in that diagram commutes just because the map in the top row is a restriction of the map in the middle row.  This completes the proof of Lemma \ref{embmultcommutesonefactor}. 
\end{proof} 


\bigskip
\begin{lemma}
\label{normalpullback}
In diagram (\ref{embedonefactorinterior}) the normal bundle to the left-hand composite embedding
\[U\subset E_{q,t} \x E_{r,s} \incl \K \x \bR^{\langle L'\rangle,N'} \x \K \x \bR^{\langle L'' \rangle, N''} 
\]
 is the pullback of the normal bundle to the right-hand composite embedding 
\[E_{q+r,t+s} \incl \K \x \R^{\L, N}.
\]
\end{lemma}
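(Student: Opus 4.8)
The plan is to recognize the left-hand embedding of~(\ref{embedonefactorinterior}), after the canonical identification of the Euclidean factors, as an \emph{open} inclusion into the pullback along the connect-sum map $\mu=\mu_\K\colon\K\x\K\to\K$ of the right-hand embedding, and then to apply two elementary facts: that the normal bundle of an open submanifold is the restriction of the normal bundle of the ambient submanifold, and that a fiberwise embedding over a base pulls back, together with its normal bundle, along a map of base spaces.

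\textbf{Factoring the bottom map.} Recall from Section~\ref{neatembtotspace} that the neat embedding $e_N$ of $E_{q+r,t+s}$ into $\K\x\R^{\L,N}$ has the form $\id_\K\x\iota_C$ for a fixed neat embedding $\iota_C$ of $C_{q+r+t+s}(\R\x D^2)$; in particular the bundle $E_{q+r,t+s}\to\K$ is embedded fiberwise over $\K$. Let $\mu^\ast E_{q+r,t+s}\to\K\x\K$ be the pullback bundle, embedded into $(\K\x\K)\x\R^{\L,N}$ by the induced fiberwise embedding; by the second elementary fact its normal bundle is the pullback of the normal bundle of $E_{q+r,t+s}$ along $\mu^\ast E_{q+r,t+s}\to E_{q+r,t+s}$, and $\mu\x\id\colon(\K\x\K)\x\R^{\L,N}\to\K\x\R^{\L,N}$ carries $\mu^\ast E_{q+r,t+s}$ onto $E_{q+r,t+s}$. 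Let $\bar g$ be the $\L$-diffeomorphism $\K\x\R^{\langle L'\rangle,N'}\x\K\x\R^{\langle L''\rangle,N''}\xrightarrow{\,\cong\,}(\K\x\K)\x\R^{\L,N}$ which is the identity on the two $\K$-factors and the canonical reordering identification $\R^{\langle L'\rangle,N'}\x\R^{\langle L''\rangle,N''}\cong\R^{\L,N}$ on the Euclidean factors. Then the bottom map of~(\ref{embedonefactorinterior}) is $(\mu\x\id)\circ\bar g$.

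\textbf{The open inclusion.} The heart of the argument is to show that $\bar g$ carries $E_{q,t}\x E_{r,s}$ onto an open $\L$-submanifold of $\mu^\ast E_{q+r,t+s}$. Containment $\bar g(E_{q,t}\x E_{r,s})\subseteq\mu^\ast E_{q+r,t+s}$ is exactly the commutativity assembled in Lemma~\ref{embmultcommutesonefactor} and Sublemma~\ref{multCqembs}: the Sublemma says that, for the chosen neat embeddings, the canonical map on the ambient Euclidean spaces with corners restricts to the configuration-space multiplication $\mu_C$, and, as in the proof of the preceding lemma, the little-intervals action alters each knot only in the long direction and merely reorders points, so the first $q+r$ points of any configuration in the image of $\mu_C$ lie on the connect-sum knot $\mu(f_1,f_2)$. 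Openness then follows from a codimension count: both $E_{q,t}\x E_{r,s}$ and $\mu^\ast E_{q+r,t+s}$ have the \emph{same} finite codimension $k=(L+N)-\bigl((q+r)+3(t+s)\bigr)$ in $(\K\x\K)\x\R^{\L,N}$, so a codimension-$k$ submanifold contained in another is open in it, and $\bar g$, being a diffeomorphism, sends the codimension-$k$ submanifold $E_{q,t}\x E_{r,s}$ to a codimension-$k$ submanifold. Restricting to $U$, and in general reasoning stratum by stratum, this is compatible with the $\L$-structures, so corners cause no trouble.

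\textbf{Conclusion.} Combining the two steps: the normal bundle of $E_{q,t}\x E_{r,s}\incl\K\x\R^{\langle L'\rangle,N'}\x\K\x\R^{\langle L''\rangle,N''}$ is, via $\bar g$ and the open-inclusion fact, the restriction of the normal bundle of $\mu^\ast E_{q+r,t+s}$, which by the first step is the pullback of the normal bundle of $E_{q+r,t+s}\incl\K\x\R^{\L,N}$ along the composite $E_{q,t}\x E_{r,s}\to\mu^\ast E_{q+r,t+s}\to E_{q+r,t+s}$, i.e.\ along the unquotiented multiplication of total spaces. Restricting this bundle isomorphism to $U$, on which this multiplication is the map $\mu_E|U$ of~(\ref{embedonefactorinterior}), yields the claim. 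I expect the only delicate point to be the containment in the second step: it is precisely where the order-respecting choice of neat embeddings from the proof of Lemma~\ref{embmultcommutesonefactor} is used, and where one must keep track both of the ``points on the knot'' condition and of the matching of corner strata on the two sides.
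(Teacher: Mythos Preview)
Your proof is correct and follows essentially the same idea as the paper's: both reduce to a codimension (equivalently, dimension) count showing that $E_{q,t}\times E_{r,s}$ sits inside the target with full codimension, hence its normal bundle is the whole pullback rather than a proper subbundle. The paper's version is terser---it simply notes that all three horizontal maps in diagram~(\ref{embedonefactorinterior}) are embeddings, so the left normal bundle is automatically a subbundle of the pullback, and then matches dimensions---whereas you make the mechanism more explicit by factoring through the pullback bundle $\mu^\ast E_{q+r,t+s}$ over $\K\times\K$ and phrasing the dimension match as an open inclusion.
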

\begin{proof}
All three  horizontal maps in that diagram are actually embeddings, so the left-hand normal bundle is a subbundle of the pullback.  But it is easy to see that the dimensions of the left and right normal bundles are equal, hence the left-hand normal bundle is the whole pullback bundle.  (Alternatively, the preimage of a tubular neighborhood of the top right-hand embedding is a tubular neighborhood of the top left-hand embedding, because either one is the space of $(f_1,f_2,x_1,...,x_{q+t+r+s})$ with $q+r$ of the $x_i$ on the knot $f_1 \# f_2$; for the bottom square, clearly the left-hand normal bundle is the pullback of the right-hand normal bundle.)  
\end{proof}

This fact ensures that the diagram obtained after taking Thom collapse maps commutes:

\begin{lemma}
\label{collapsecommutes}
Taking Thom collapse maps and quotienting by boundaries in (\ref{embedonefactorinterior}) gives the following commutative diagram, where $M=L+N$ and similarly for $M', M''$, and where each superscript $\nu$ indicates the Thom space of the normal bundle of the appropriate embedding:
\begin{equation}
\xymatrix{
E^\nu_{q,t}/\partial \wedge E^\nu_{r,s}/\partial \ar[r] & U^\nu/\partial \ar[r] & E^\nu_{q+r,t+s} \\
\K_+\wedge S^{M'} \wedge \K_+\wedge S^{M''} \ar[u]\ar@{=}[r] & \K_+\wedge S^{M'} \wedge \K_+\wedge S^{M''} \ar[u]\ar[r] & \K_+\wedge S^M \ar[u]} \label{multthomspaces}
\end{equation}
\end{lemma}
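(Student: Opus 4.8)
The plan is to obtain (\ref{multthomspaces}) by applying the Thom collapse construction to the already-commutative diagram (\ref{embedonefactorinterior}), using Lemma \ref{normalpullback} to control the normal bundles. The general principle I will invoke repeatedly is the naturality of the Thom collapse map: given neat embeddings $W\incl A$ and $W_1\incl A_1$ together with maps $g:W\to W_1$ and $h:A\to A_1$ making the evident square commute, and such that the preimage under $h$ of a tubular neighborhood of $W_1$ in $A_1$ is a tubular neighborhood of $W$ in $A$ --- equivalently, such that the normal bundle of $W\incl A$ is the pullback along $g$ of the normal bundle of $W_1\incl A_1$ --- the collapse maps $A\to W^\nu$ and $A_1\to W_1^\nu$ commute with $h$ and with the evident map of Thom spaces $W^\nu\to W_1^\nu$ covering $g$. (This is precisely the mechanism used in the proof of Theorem \ref{mapofspectra} to produce the structure maps of $\E$, via $\R^{\L,N}\incl\R^{\L,N}\x\R$.) So most of the content of the lemma is already contained in Lemmas \ref{embmultcommutesonefactor} and \ref{normalpullback}; what remains is to feed these into this principle and to keep track of one-point compactifications, smash products of Thom spaces, and quotients by boundary.

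For the right-hand square of (\ref{multthomspaces}) I would apply the principle to the outer rectangle of (\ref{embedonefactorinterior}), whose left vertical composite is the neat embedding $U\subset E_{q,t}\x E_{r,s}\incl \K\x\R^{\langle L'\rangle,N'}\x\K\x\R^{\langle L''\rangle,N''}$, whose right vertical composite is $E_{q+r,t+s}\incl\K\x\R^{\L,N}$, and whose normal-bundle hypothesis is supplied verbatim by Lemma \ref{normalpullback}. One-point-compactifying the Euclidean-with-corners factors turns the bottom map of the resulting square into the map $\K_+\wedge C^{L'} S^{N'}\wedge\K_+\wedge C^{L''} S^{N''}\to\K_+\wedge C^L S^N$ that is $\mu$ on the two knot factors and the canonical identification on the cone-on-sphere factors, and turns its top into the map $U^\nu\to E^\nu_{q+r,t+s}$ appearing in (\ref{multthomspaces}). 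Quotienting by boundary then replaces $C^L S^N$ by $C^L S^N/\d(C^L S^N)=S^M$, so the bottom becomes $\K_+\wedge S^{M'}\wedge\K_+\wedge S^{M''}\to\K_+\wedge S^M$; on the Thom-space side the collapse maps send boundary subspaces into boundary subspaces, since the corner strata of $C_{q+r,t+s}$, and hence those of $E_{q+r,t+s}$, interact with the multiplication exactly as recorded in Remark \ref{rmkmultfaces}. Thus all the maps descend to the quotients by boundary, and the square is the right-hand square of (\ref{multthomspaces}).

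For the left-hand square I would use two elementary features of Thom collapse maps. First, the collapse map of a \emph{product} of neat embeddings is, after the obvious shuffle, the smash product of the two individual collapse maps, so it has target $E^\nu_{q,t}\wedge E^\nu_{r,s}$. Second, restricting a neat embedding to an \emph{open} submanifold alters its collapse map only by post-composition with the map collapsing the Thom space over the complement of that submanifold. Here the relevant open submanifold is $U=\inte E_{q,t}\x\inte E_{r,s}$: it is open in $E_{q,t}\x E_{r,s}$ and equal to the preimage of the interior of the target, since by the codimension-additivity of Remark \ref{rmkmultfaces} a pair of configurations multiplying into an interior point must have both factors interior. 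Because the complement of $U$ is exactly $\d(E_{q,t}\x E_{r,s})$, collapsing the Thom space over it coincides with quotienting by the boundary, which identifies $E^\nu_{q,t}\wedge E^\nu_{r,s}$ with $E^\nu_{q,t}/\partial\wedge E^\nu_{r,s}/\partial=U^\nu=U^\nu/\partial$ (the last equality because $U$ is open); so this post-composition realizes precisely the first map of the top row of (\ref{multthomspaces}). The ambient Euclidean-with-corners space is unchanged by the restriction, so after one-point-compactifying and quotienting the source the bottom map is literally the identity of $\K_+\wedge S^{M'}\wedge\K_+\wedge S^{M''}$, and the left-hand square commutes.

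I expect the only genuine difficulty to lie in the corner and boundary bookkeeping rather than in anything conceptual: one must check that every collapse map and every multiplication map respects the filtration of the relevant $\L$-space by its corner strata, so that all the maps descend to the quotients by boundary, and one must choose the tubular neighborhoods coherently across the whole diagram, just as the neat embeddings were chosen coherently in the sublemma within the proof of Lemma \ref{embmultcommutesonefactor}. Since Lemmas \ref{embmultcommutesonefactor} and \ref{normalpullback} have already carried out this coherence work at the level of spaces and of bundles, applying the Thom collapse functor and passing to the quotients is then a formality.
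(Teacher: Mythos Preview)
Your proposal is correct and follows the same approach as the paper: the right-hand square commutes by Lemma \ref{normalpullback} via naturality of Thom collapse under pullback of normal bundles, and the left-hand square commutes because $U$ is the interior of $E_{q,t}\times E_{r,s}$ and its embedding is merely the restriction of the product embedding, so the top-left map is just the quotient collapsing the boundary. The paper's proof is extremely terse (three sentences), and what you have written is essentially a careful expansion of those three sentences, with the boundary and compactification bookkeeping made explicit.
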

\begin{proof}
Lemma \ref{normalpullback} implies that the right-hand square commutes.  Since $U$ is contained in the interior of $E_{q,t} \x E_{r,s}$, the top left horizontal map is just a quotient map, and the left-hand square commutes because the embedding of $U$ into Euclidean space was just the restriction of the embedding of $E_{q,t} \x E_{r,s}$.
\end{proof}



\subsection{Multiplication and the Thom spectrum}
\label{thomringspectrumsection}

At this point, we will both (a) apply disjoint union to previous diagrams to get one space with a multiplication and (b) let the dimension of the Euclidean space increase to get a spectrum with a multiplication, i.e., a ring spectrum.  This will complete the proof of Theorem \ref{mapofringspectra}; the previous section already provided many of the necessary ingredients.  We recall the statement here for the reader's convenience.  Here and below, by a ring spectrum $R$ we mean a ring spectrum in the homotopy sense.  That is, $R$ has a multiplication $R\wedge R \to R$ which is associative only up to homotopy, and it has a map $S^0 \to R$ which is a unit for the multiplication only up to homotopy.

\begin{T1}
The total spaces $E_{q,t}$ of the bundles over the knot space have a multiplication $E_{q,t}/\d E_{q,t}\x E_{r,s}/\d E_{r,s}\to E_{q+r,t+s}/\d E_{q+r,t+s}$ which makes a certain wedge of Thom spectra $\bigvee_{q,t \in \bN} E^\nu_{q,t}/\d E^\nu_{q,t}$ into a ring spectrum.  The Thom collapse maps for various $q,t$ induce a map of ring spectra
\[
\bigvee_{q,t} \Sigma^\infty \K_+ \to \bigvee_{q,t} E^\nu_{q,t}/\d E^\nu_{q,t}
\]
where the multiplication in $\bigvee \Sigma^\infty \K_+$ comes from the space-level connect-sum.  This induces an ``integration along the fiber'' map in cohomology with arbitrary coefficients, producing classes in $H^*\K$.
\end{T1}


\begin{proof}[Proof of Theorem \ref{mapofringspectra}]
We define the Thom spectrum $\bigvee \E$ precisely as follows.  The fact that only finitely many factors in the disjoint union embed into $\R^{\L, N}$ for fixed $L,N$ leads to slightly more complicated indexing.  Let $M\in \bN$, and let $L(M),N(M)$ be nondecreasing sequences of natural numbers so that $L(M)+N(M)=M$ and $L(M),N(M)\to\infty$ as $M\to\infty$.  Then define $(\bigvee \E)_M:=\bigvee_{q,t\in I(M)} E^\nu_{q,t}$, where $I(M)$ is the index set of pairs of natural numbers $q,t$ such that $C_{q+t}$ embeds into $\bR^{\langle L(M)\rangle, N(M)}$, and where $\nu=\nu_{q,t,M}$ is the normal bundle to the embedding $E_{q,t} \incl \K \x \bR^{\langle L(M) \rangle, N(M)}$.  The structure maps are just wedges of the structure maps of $\E$.  There is a subspectrum $\partial \bigvee E^\nu$, whose $M^{\textrm{th}}$ space is just the wedge of the boundary subspaces of the appropriate Thom spaces, and we consider the quotient $\bigvee E^\nu/\partial$ by this subspectrum.  Since our multiplication is only defined on the quotients by boundary, we can make only $\bigvee E^\nu/\partial$, and not $\bigvee E^\nu$ itself, into a ring spectrum.   Of course, $\mu: \K\x\K\to \K$ makes $\Sigma^\infty \K$ into a ring spectrum.

We first take a disjoint union of diagrams (\ref{embedonefactorinterior}) which commute by Lemma \ref{embmultcommutesonefactor}.  For any $M'+M''=M$, that gives us the square below, where $L=L(M), L'=L(M')$, etc.  Note that the conditions on $L,N$ imply that $L'+L''=L$, $N'+N''=N$. To get a map where the dashed arrow is, we must not only restrict to open subsets of the factors as in (\ref{embedonefactorinterior}), but also to the factors indexed by $q,t,r,s$ such that $E_{q+r,t+s}$ can be embedded into $\bR^{\L,N}$.
\begin{equation}
\xymatrix{
\coprod_{q,t\in I(M')} E_{q,t} \x \coprod_{q,t\in I(M'')} E_{q,t} \ar@{-->}[r] \ar@{^(->}[d] & \coprod_{q,t\in I(M)}E_{q,t}\ar@{^(->}[d] \\
\K \x \coprod_{q,t\in I(M')} \bR^{\langle L' \rangle, N'} \x \K \x \coprod_{q,t\in I(M'')} \bR^{\langle L'' \rangle, N''}
\ar[r]& \K \x \coprod_{q,t\in I(M)} \bR^{\L, N} }\label{embeddisjointunion}
\end{equation}

The normal bundle to the embedding of each disjoint union is of course the disjoint union of the normal bundles of the factors; the normal bundle to the embedding of the product on the left is the product of the normal bundles to the embeddings of the factors. Thus the Thom space of the normal bundle to the upper-left space is a smash product of two wedge products of Thom spaces.  So taking the Thom collapse of the vertical embeddings and compactifying the ambient spaces induces the top square below, which is just a wedge (or disjoint union) of diagrams (\ref{multthomspaces}), which commute by Lemma \ref{collapsecommutes}.  The top arrow is still only defined on certain factors $E_{q,t}/\partial \wedge E_{r,s}/\partial$, but quotienting by boundaries allows it to be defined on all of such a factor $E_{q,t}/\partial \wedge E_{r,s}/\partial$.  Quotienting by the whole boundaries also causes the normal bundles $\nu$ not to depend the $L(M)$'s or $N(M)$'s, but only the $M$'s.  The bottom square, which will be useful shortly, just comes from an inclusion induced by a choice of $q,t$ in the first wedge sum and $r,s$ in the second wedge sum.

\begin{equation}
\xymatrix{
\bigvee_{q,t\in I(M')} E^{\nu_{M'}}_{q,t}/\partial \wedge \bigvee_{q,t\in I(M'')} E^{\nu_{M''}}_{q,t}/\d \ar@{-->}[r] & \bigvee_{q,t\in I(M)}E_{q,t}^{\nu_M} /\d \\
\K_+ \wedge \bigvee_{q,t\in I(M')} S^{M'}\wedge \K_+ \wedge \bigvee_{q,t\in I(M'')} S^{M''} \ar[r] \ar[u] & \K_+ \wedge \bigvee_{q,t\in I(M)} S^M \ar[u] \\
\K_+ \wedge S^{M'} \wedge \K_+ \wedge S^{M''} \ar[r] \ar[u]^{\tau_{q,t}\wedge \tau_{r,s}} & \K_+ \wedge S^M \ar[u]_{\tau_{q+r,\> t+s}}}\label{thomdisjointunion}
\end{equation}

We will now see that $\bigvee \E/\partial$ is a ring spectrum.  To get a well defined spectrum $\bigvee \E \wedge \bigvee \E$ in the homotopy category, we can set 
\[\left(\bigvee \E/\partial \wedge \bigvee \E/\partial\right)_M=\bigvee \E/\partial_{M'} \wedge \bigvee \E/\partial_{M''}
\]
for any choice of $M',M''$ satisfying $M'+M''=M$, as long as $M',M''\to \infty$ as $M\to \infty$ (cf. Adams' book \cite{Adams}).  Notice also that the subspectrum on which the dotted arrow is defined is cofinal, which suffices for a map (again cf. \cite{Adams}) of spectra
\[\bigvee \E/\partial \wedge \bigvee \E/\partial \to \bigvee \E/\partial,
\]
so $\bigvee \E/\partial$ is a ring spectrum.  Notice that the cofinality of the sequences $M', M'',L$, and $N$ ensures that the ring spectrum structure encodes multiplication of a configuration of $q+t$ points by one of $r+s$ points for any $q,t,r,s$.

Diagram \ref{thomdisjointunion} implies that we have a map of ring spectra 
\[
\tau: \bigvee \Sigma^\infty \K_+  \to \bigvee E^\nu \label{ringspectra}
\]
where $\left( \bigvee \Sigma^\infty \K_+ \right)_M := \K_+ \wedge \bigvee_{q,t \in I(M)} S^M$ has a multiplication induced by connect-sum (i.e., it is a wedge of the multiplication maps for the suspension spectrum $\Sigma^\infty \K_+$).


All that remains is to exhibit maps in cohomology whose target is $H^*\K$.  To achieve this, we just restrict to one factor at a time in the wedge on the left-hand side.  We have maps as in (\ref{thomdisjointunion})
\[
\tau_{q,t}:\Sigma^\infty \K_+ \to \bigvee E^\nu \label{quotientringspectra}.
\]
Via the suspension isomorphisms, the image of $\tau_{q,t}^*$ is in $H^*\K$.
\end{proof}

We note that having a map of ring spectra includes the relation
\[
\mu_E \circ (\tau_{q,t} \wedge \tau_{r,s}) \simeq \tau_{q+r,t+s} \circ \mu,
\]
where the two sides don't agree on the nose because we are using an up-to-homotopy notion of ring spectra.  Still, this means that induced maps in homology agree, which will be enough for our purposes. 

\begin{remark}
Restricting to boundary strata of the $C_{q+t}$'s corresponds to restricting to boundary strata in $E_{q,t}$ (by taking the preimage of the embedding $E_{q,t}\incl \K \x C_{q+t}$).  The multiplication on the $E_{q,t}/\d$'s can be defined on strata of the $E_{q,t}$'s modulo their boundaries just as in the case of the configuration spaces, as explained in remark \ref{rmkmultfaces}.  So while in diagrams (\ref{multthomspaces}) and (\ref{thomdisjointunion}) we brutally quotient by the whole boundaries, we have a diagram like (\ref{multthomspaces}) for each pair of strata in $E_{q,t}$ and $E_{r,s}$.   The Thom spaces in the top row would be replaced by the subspaces which are Thom spaces of these strata, while the bottom row would remain the same, since the restrictions of the neat embeddings of the whole spaces are neat embeddings of the strata. 
\qed
\end{remark}

We finally note that we did not rely on $\R^3$ for the proof of this theorem, since we did not make full use of the little 2-cubes action on $\K$.  It is easy to see that a space-level connect-sum can be defined for long knots in $\R^n$ for any $n \geq 3$, so the Theorem extends to this case.  We leave it to the reader to verify the details.

\subsection{Computing the coproduct on cohomology of configuration spaces}
\label{multconfhomology}

We have established the compatibility of the Thom collapse maps with the multiplications on the spaces involved, so we would now like to explicitly compute the coproduct $\mu_E^*$ on certain classes in $H^*(\coprod E_{q,t}/\partial)$.  In order to do this, we will need to understand the coproduct on $H^*(\coprod (C_{q,t}/\d))$, but we start by considering the closely related map on $H^*(\coprod C_q/\partial)$.  To understand it, we consider a space-level coproduct on $\coprod C_q$. 

\begin{definition}
Define a space-level coproduct $\delta$ on $\coprod C_q$ as the map whose components $\delta_{pq}: C_{p+q}\to C_p\x C_q$ are given as follows.  There is a map 
\[
(S^3)^{p+q} \x \prod_{\substack{S\subset \{1,...,p+q\} \\ |S|\geq 2}} \Bl((S^3)^S, \Delta_S) \to
(S^3)^p \x \prod_{\substack{S\subset \{1,...,p\} \\ |S|\geq 2}} \Bl((S^3)^S, \Delta_S) \x 
(S^3)^q \x \prod_{\substack{S\subset \{p+1,...,p+q\} \\ |S|\geq 2}} \Bl((S^3)^S, \Delta_S)
\]
induced by the obvious diffeomorphism $(S^3)^{p+q}\to (S^3)^p\x (S^3)^q$ and the obvious projection on the products of blowups.  Take $\delta_{pq}$ to be the restriction of this map to the compactified configuration spaces. 
\end{definition}

This map does not take boundary to boundary, but it is an embedding when restricted to the interior of its domain.  We will show 

\begin{proposition}
\label{mudualtodelta}
The multiplication $\mu_C$ is homotopic to the Thom collapse map of this embedding $\delta$ and hence is Spanier-Whitehead dual to it.
\end{proposition}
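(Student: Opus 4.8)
The plan is to identify both maps --- the multiplication $\mu_C$ and the Thom collapse of $\delta$ --- with the same umkehr/Pontrjagin-Thom construction, and then invoke the standard fact that a Pontrjagin-Thom collapse is Spanier-Whitehead dual to the embedding it is built from. So the real content is to recognize $\mu_C$ as a Thom collapse.

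\medskip

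First I would set up the picture carefully. The multiplication $\mu_C\colon C_p/\partial \wedge C_q/\partial \to C_{p+q}/\partial$ was defined (Definitions \ref{intactiononCq}, \ref{modintactiononCq}) by acting on each factor by a fixed little interval $\ell_i$ and then juxtaposing the point-configurations; on interiors this is an honest embedding $C_p \times C_q \hookrightarrow C_{p+q}$ onto the open subset $U$ of configurations whose first $p$ points lie in $\ell_1(I)\times D^2$ and whose last $q$ points lie in $\ell_2(I)\times D^2$. Dually, the coproduct $\delta_{pq}\colon C_{p+q}\to C_p\times C_q$ of the final Definition is an embedding on the interior, landing as the open subset of pairs with compatible blowup data. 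The key observation is that, up to the diffeomorphism of $\R\times D^2$ induced by $h\circ(\ell_i\times\id)$ already used in Sublemma \ref{multCqembs}, the image $U$ of $\mu_C$ and the domain-of-definition of $\delta_{pq}$ are \emph{the same} open submanifold of $C_{p+q}$ (on one side glued from two standardly-placed configurations, on the other side a product neighborhood of its image). Concretely, I would check that $\delta_{pq}$ restricted to $U$ is a diffeomorphism $U\xrightarrow{\ \cong\ } C_p\times C_q$ whose inverse is exactly $\mu_C|U$.

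\medskip

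Next I would run the Pontrjagin-Thom argument. Since $\delta_{pq}$ is a smooth embedding of the closed (compact) manifold-with-faces $C_p\times C_q$ --- actually one only needs the restriction to the open set $U\subset C_{p+q}$, where it is a codimension-zero open embedding --- the Thom collapse map of $\delta$ is the map $C_{p+q}/\partial \to (C_p\times C_q)/\partial = C_p/\partial \wedge C_q/\partial$ that collapses everything outside $U$ to the basepoint and uses $\delta_{pq}|U$ on $U$. But collapsing the complement of $U$ inside $C_{p+q}/\partial$ and applying $\delta_{pq}|U = (\mu_C|U)^{-1}$ is, by inspection, precisely how $\mu_C$ was defined (interior points in $U$ go by the juxtaposition formula, everything else --- i.e.\ boundary, or points outside the images of the $\ell_i\times\id$ --- goes to the boundary basepoint). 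Hence $\mu_C$ \emph{is} the Thom collapse of $\delta$, at least up to the homotopies built into the up-to-homotopy conventions (any two choices of $\vec{\ell}$, and the trivial-action variant noted after Definition \ref{intactiononCq}, give homotopic maps, which is why the statement only asserts ``homotopic''). Spanier-Whitehead duality between a Thom collapse and its embedding is then the general principle recalled in the references \cite{Cohen-Klein, Becker-Gottlieb, Bredon}, giving the last clause.

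\medskip

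The main obstacle, I expect, is bookkeeping around corners and basepoints rather than anything deep: one must verify that $\delta_{pq}$ is genuinely a \emph{neat} codimension-zero embedding of the open face-manifold $U$ compatible with the face structures on both sides (so that collapse and Thom spaces make sense in the $\L$-space setting of section \ref{xfer}), and that ``collapse the complement'' on the quotient $C_{p+q}/\partial$ matches ``send boundary and out-of-range configurations to basepoint'' in the definition of $\mu_C$ --- including the reordering permutation $\sigma$ of (\ref{sigmadefn}) when one passes from $C_q$ to $C_{q,t}$. Once those identifications are made, identifying the two maps and quoting Spanier-Whitehead duality is formal, and the proposition follows.
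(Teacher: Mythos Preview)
Your overall strategy---recognize $\mu_C$ as a Pontrjagin--Thom collapse and then invoke Atiyah duality---is the paper's. But the directions are reversed throughout, and this is not a harmless slip: it makes you describe the wrong map. The embedding $\delta_{pq}$ goes \emph{from} $C_{p+q}$ \emph{into} $C_p\times C_q$ (on interiors), so its Thom collapse is a map $(C_p\times C_q)/\partial \to C_{p+q}/\partial$, matching the direction of $\mu_C$. What you actually describe---collapse the complement of $U\subset C_{p+q}$ and apply $\delta_{pq}|_U$---is a map $C_{p+q}/\partial \to (C_p\times C_q)/\partial$; that is the Thom collapse of the \emph{other} codimension-zero embedding $\overset{\circ}{\mu}_C:\inte(C_p\times C_q)\hookrightarrow\inte\,C_{p+q}$, i.e.\ the map $\bar\delta$ of Remark~\ref{mucirc}, not $\mu_C$. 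Relatedly, your claim that $\delta_{pq}|_U$ is a diffeomorphism onto all of $C_p\times C_q$ is false for nontrivial $\vec\ell$: its image is only the proper subset of pairs whose first configuration lies in $\ell_1(I)\times D^2$ and second in $\ell_2(I)\times D^2$.

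The paper's argument runs in the correct direction by passing to the trivial-intervals variant of $\mu_C$ (which you mention but do not actually deploy). For that variant one checks $\mu_C^{\mathrm{triv}}\circ\delta=\id$ on $\inte\,C_{p+q}$; hence $\mu_C^{\mathrm{triv}}$ restricted to $\im\delta\subset\inte(C_p\times C_q)$ is $\delta^{-1}$, while on the complement (pairs of configurations sharing a point, together with the boundary) it lands on a collision stratum and hence at the basepoint. This \emph{is} the Thom collapse of $\delta$, and since $\mu_C\simeq\mu_C^{\mathrm{triv}}$ the proposition follows. Your outline becomes a correct proof once all the arrows are turned around and the trivial-intervals homotopy is invoked at the right moment.
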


Before starting the proof, we briefly review Spanier-Whitehead duality.  In general, this duality gives spectra dual to spaces, unlike Poincar\'{e} duality, which is just at the level of homology.  (We will end up with a map of spaces instead of spectra, but it is still natural to proceed via Spanier-Whitehead duality.)  The Spanier-Whitehead dual of a space $X$ is a spectrum $\map(X,S)$ whose $n^{\textrm{th}}$ space is the space of maps from $X$ to $S^n$.  A map of spaces $f:X\to Y$ induces a map of Spanier-Whitehead duals $f_!:\map(Y,S) \to \map(X,S)$.  In the case that $f$ is an embedding of manifolds, a theorem of Atiyah \cite{Atiyah} says that the Spanier-Whitehead dual $\map(X,S)$ is the Thom spectrum $X^\nu$ of the normal bundle of $X$ and that the map $f_!$ is given by the Thom collapse map. When $X$ has boundary, the dual is the quotient $X^\nu/ \partial X^\nu$ of Thom spectra.

\begin{proof}[Proof of Proposition \ref{mudualtodelta}]
All of the above remarks apply to $\delta$ because it is an embedding when restricted to the interior.  Since the ``open" configuration spaces are open subsets of Euclidean space, they are framed manifolds (i.e., their normal bundles are trivial), so the dual to $\delta$ is a map of suspension spectra
\[
\delta_! : \Sigma^\infty (C_p \x C_q)_+ / \Sigma^\infty \partial(C_p \x C_q)_+ \to \Sigma^\infty (C_{p+q})_+ / \Sigma^\infty \partial(C_{p+q})_+.
\]
Remember that for any choice of nonsymmetric little intervals object $(\ell_1, \ell_2)$, $\mu_C$ is homotopic to a multiplication given by a trivial action of the little intervals.  On the interior of $C_{p+q}$, the composition of $\delta$ followed by that multiplication is the identity.  Thus $\mu$ is homotopic to the Thom collapse map on the image of the interior, and furthermore either map collapses everything else to the basepoint.  So $\mu_C: C_p/\partial \wedge C_q/\partial \to C_{p+q}/\partial$ induces the map of spectra $\delta_!$ and is hence dual to $\delta$ in (co)homology.
\end{proof}


\begin{remark}\label{mucirc} Similarly, instead of $\mu_C$ we could consider a map $\overset{\circ}{\mu}_C:\inte(C_p\x C_q)\to \inte \>C_{p+q}$ defined as follows.  Identifying the interior of $I\x D^2$ with $\bR^3$, we see that there is an obvious nonsymmetric little intervals action on $\inte \coprod C_q$ given by putting configurations of points next to each other in the long direction; this induces the multiplication.  Notice that it is an embedding.  The map $\delta$ induces $\bar{\delta}: C_{p+q}/\partial \to C_p/\partial \wedge C_q/\partial$, and the composition $\bar{\delta} \circ \overset{\circ}{\mu}_C$ is homotopic to the identity.  So we see that $\bar{\delta}$ is (homotopic to) the Thom collapse map dual to the embedding $\overset{\circ}{\mu}_C$.  For our purposes $\overset{\circ}{\mu}_C$ is not so useful because we will need a multiplication on quotients, and $\overset{\circ}{\mu}_C$ fails to take boundary to boundary.
\qed
\end{remark}

Next recall from \ref{BTsection} that we have classes $\omega_{ij}$ for $1\leq i < j \leq q$ which generate the cohomology of $C_q$ as an algebra.
These classes and their duals $\omega_{ij}^*$ in homology can be thought of as ``spherical" classes since geometrically, they correspond to the $i^{\textrm{th}}$ point moving in a sphere around the $j^{\textrm{th}}$ point.  We now compute the product induced by $\delta$ on cohomology\footnote{More precisely, we have $\xymatrix{ H^*(\coprod C_q) \otimes H^*(\coprod C_q)\ar[r]^-{\x} & H^*(\coprod C_q \x \coprod C_q) \ar[r] & H^*(\coprod C_q)}$, where the cross-product $\x$ is injective, though not an isomorphism because there are infinitely many components.  When we consider the dual coproduct, we will only consider elements in $\bigoplus_q H^*C_q < \prod_q H^*C_q \cong H^*(\coprod_q C_q)$, which are mapped by this coproduct into the image of the cross-product, on which the cross-product has a well defined inverse.  Thus this issue does not arise when we confuse the induced map in (co)homology with the (co)product obtained by composing with the cross-product.}:

\begin{lemma}\label{proddeltaqr}
The components $\delta_{qr}:C_{q+r}\to C_q \x C_r$ of $\delta$ satisfy the formulae below
\begin{align*}
\delta_{qr}^*(\omega_{ij} \otimes 1)&= \omega_{ij} \\
\delta_{qr}^*(1 \otimes \omega_{ij})&= \omega_{q+i,q+j}
\end{align*}
where the different $\omega_{ij}$'s and $1$'s are elements of different algebras; to which one each belongs is clear from the domain and range of $\delta_{qr}$.
Hence 
\[
\delta_{qr}^*(\omega_{i_1 j_1}\cup...\cup \omega_{i_m j_m}\> \otimes \> \omega_{k_1 l_1}\cup...\cup \omega_{k_n l_n}) = 
\omega_{i_1 j_1} \cup... \cup \omega_{i_m j_m} \cup \omega_{q+k_1, q+l_1} \cup...\cup \omega_{q+k_n, q+l_n},
\]
and since any class in $H^*C_q$ is a sum of monomials in the $\omega_{ij}$, this completely describes the product.
\end{lemma}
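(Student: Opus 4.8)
The plan is to reduce the whole statement to the two displayed identities $\delta_{qr}^*(\omega_{ij}\otimes 1)=\omega_{ij}$ and $\delta_{qr}^*(1\otimes\omega_{ij})=\omega_{q+i,q+j}$, and then propagate them through cup products. First I would pin down the two composites $\mathrm{pr}_1\circ\delta_{qr}\colon C_{q+r}\to C_q$ and $\mathrm{pr}_2\circ\delta_{qr}\colon C_{q+r}\to C_r$. On the interiors, which are dense, $\delta_{qr}$ is literally $(x_1,\dots,x_{q+r})\mapsto\bigl((x_1,\dots,x_q),(x_{q+1},\dots,x_{q+r})\bigr)$, so $\mathrm{pr}_1\circ\delta_{qr}$ restricts to the forgetful map that drops the last $r$ points and $\mathrm{pr}_2\circ\delta_{qr}$ restricts to the map that retains only the last $r$ points (with $k$-th output $x_{q+k}$). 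Since $\delta_{qr}$ is by construction the restriction of a continuous map on the ambient product of copies of $S^3$ with products of blowups, and $C_{q+r}$ and $C_q\times C_r$ are the closures of these interiors, the identifications extend to the compactifications by continuity.

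Next I would use that the maps $\phi_{ij}$ extend continuously over the Fulton--MacPherson/Axelrod--Singer compactifications (as recalled in \S\ref{BTsection}): for $1\le i<j\le q$ the composite $\phi_{ij}\circ\mathrm{pr}_1\circ\delta_{qr}$ agrees on the dense interior of $C_{q+r}$ with $\phi_{ij}\colon C_{q+r}\to S^2$ --- both send $x$ to $(x_i-x_j)/|x_i-x_j|$ --- hence agrees everywhere. Pulling back the volume form $\omega$ and identifying $\omega_{ij}\otimes 1$ with $\mathrm{pr}_1^*\omega_{ij}$ under the cross product gives $\delta_{qr}^*(\omega_{ij}\otimes 1)=\omega_{ij}$ in $H^*C_{q+r}$. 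The same reasoning with $\mathrm{pr}_2$ in place of $\mathrm{pr}_1$ shows $\phi_{ij}\circ\mathrm{pr}_2\circ\delta_{qr}=\phi_{q+i,\,q+j}$ on $C_{q+r}$, whence $\delta_{qr}^*(1\otimes\omega_{ij})=\omega_{q+i,q+j}$.

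The general formula then follows formally: $\delta_{qr}^*$ is a ring homomorphism, the cross product $H^*C_q\otimes H^*C_r\to H^*(C_q\times C_r)$ is a ring map, and every generator $\omega_{ij}$ has degree $2$, so all classes involved commute and no Koszul signs arise. Writing the monomial $\omega_{i_1j_1}\cup\cdots\cup\omega_{i_mj_m}\otimes\omega_{k_1l_1}\cup\cdots\cup\omega_{k_nl_n}$ as a product of the factors $\omega_{i_aj_a}\otimes 1$ and $1\otimes\omega_{k_bl_b}$, applying $\delta_{qr}^*$ to each factor, and substituting the two basic identities yields the displayed monomial formula; the closing sentence is then immediate from the Arnold--F.~Cohen description of $H^*C_q$ as generated by the $\omega_{ij}$ (recalled in \S\ref{BTsection}).

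The one point that deserves care --- and the step I would expect to be the main obstacle, such as it is --- is the passage from the open configuration spaces to their compactifications: both the identifications of $\mathrm{pr}_1\circ\delta_{qr}$ and $\mathrm{pr}_2\circ\delta_{qr}$ and the factorizations of the $\phi_{ij}$ are proven on dense open subsets and then extended by continuity, which is legitimate only because $\delta_{qr}$ and the $\phi_{ij}$ are restrictions of genuinely continuous (indeed smooth) maps on the ambient spaces used to define the compactification. This is not a computation but a matter of quoting the relevant properties of the Fulton--MacPherson/Axelrod--Singer construction; once granted, the rest is bookkeeping.
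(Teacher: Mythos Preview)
Your proof is correct and follows essentially the same approach as the paper: decompose $\delta_{qr}$ through the two projections to $C_q$ and $C_r$, verify the pullback of each generator $\omega_{ij}$ via the maps $\phi_{ij}$, and then extend multiplicatively using that $\delta_{qr}^*$ is a ring map and all classes are even-dimensional. The paper is slightly terser about the passage to compactifications (it simply asserts the formulae ``directly''), whereas you spell out the density-plus-continuity argument, but this is a matter of detail rather than strategy.
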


\begin{proof}
We can write each $\delta_{qr}=\delta_q \x \delta_r$ for maps $\delta_q:C_{q+r}\to C_q$ and $\delta_r: C_{q+r}\to C_r$ which project to the first $q$ and last $r$ points, respectively.  We can directly see that
\[
\xymatrix{
(\omega_{ij}\in H^*C_q) \ar@{|->}[r]^-{\delta_q^*} & (\omega_{ij}\in H^*C_{q+r}) \\
(\omega_{ij}\in H^*C_r) \ar@{|->}[r]^-{\delta_r^*} & (\omega_{q+i,q+j}\in H^*C_{q+r}) }
\]
By basic properties of the cross-product, $\delta_{qr}^*(\theta \x \eta) =  \delta_q^* \theta \cup \delta_r^*\eta$ for any $\theta, \eta$, so
\begin{align*}
\delta_{qr}^*(\omega_{ij} \x 1)&= \omega_{ij} \\
\delta_{qr}^*(1 \x \omega_{ij})&= \omega_{q+i,q+j}
\end{align*}
If we abuse notation and let $\delta_{qr}^*$ denote instead the cross-product followed by the induced map in cohomology, we can replace the $\x$'s by $\otimes$'s; in general we would have to introduce a factor of $\pm 1$ coming from the relation between cup- and cross-product, but since all our classes are even-dimensional this sign is always $+1$.  Another consequence of this is that $\delta_{qr}^*$ takes the product in $H^*C_q \otimes H^* C_r$ given by $\cup \otimes \cup$ to cup-product in $H^*C_{q+r}$.  Thus
\begin{align*}
\delta_{qr}^*(\omega_{i_1 j_1}\cup...\cup \omega_{i_m j_m}\> \otimes \> \omega_{k_1 l_1}\cup...\cup \omega_{k_n l_n}) &= \\
\delta_{qr}^*((\omega_{i_1 j_1}\otimes 1)\cup...\cup (\omega_{i_m j_m}\otimes 1)\> \cup \> (1\otimes \omega_{k_1 l_1})\cup...\cup (1\otimes \omega_{k_n l_n})) &= \\
\delta_{qr}^*(\omega_{i_1 j_1}\otimes 1)\cup...\cup \delta_{qr}^*(\omega_{i_m j_m}\otimes 1)\> \cup \> \delta_{qr}(1\otimes \omega_{k_1 l_1})\cup...\cup \delta_{qr}^*(1\otimes \omega_{k_n l_n})) &= \\
\omega_{i_1 j_1} \cup... \cup \omega_{i_m j_m} \cup \omega_{q+k_1, q+l_1} \cup...\cup \omega_{q+k_n, q+l_n}
\end{align*}
which proves the last formula.
\end{proof}

Combining Lemma \ref{mudualtodelta} and Lemma \ref{proddeltaqr} immediately gives 


\begin{proposition} 
\label{coprodCq} 

The space $\coprod_q (C_q/\d)$ has a multiplication $\mu_C$ making it into a (disconnected) $H$-space.  The induced coproduct $\mu_C^*$ on cohomology is dual to the product $\delta^*$ on $H^*\left(\coprod_q C_q\right)$, as described in Lemma \ref{proddeltaqr}.  Hence the product $(\mu_C)_*$ on $H_*(\coprod_q (C_q/\d))$, as dual to $\mu_C^*$, corresponds to the product $\delta^*$.  \qed


\end{proposition}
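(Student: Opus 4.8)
The plan is to deduce the statement from Lemmas \ref{mudualtodelta} and \ref{proddeltaqr}, after a direct verification of the $H$-space axioms. First I would settle the $H$-space claim. A choice of nonsymmetric little intervals object $\vec{\ell}=(\ell_1,\ell_2)\in\C_1(2)$ gives the multiplication $\mu_C$, which is homotopy-associative by the remark following Definition \ref{intactiononCq}. For the unit I would take the empty configuration $*\in C_0$: by the definition of the action one has $\mu_C(*,\mathbf{x})=\ell_2\cdot\mathbf{x}$ and $\mu_C(\mathbf{x},*)=\ell_1\cdot\mathbf{x}$, and since each orientation-preserving affine-linear reparametrization $\ell_i\colon\R\to\R$ is isotopic to the identity (equivalently, letting the little intervals act trivially yields a homotopic multiplication, as also noted there), $*$ is a two-sided homotopy unit. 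As $\coprod_q C_q$ has infinitely many path components, this presents $\coprod_q(C_q/\d)$ as a disconnected $H$-space.

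Next I would run the Spanier--Whitehead duality argument for the cohomology coproduct. By Lemma \ref{mudualtodelta}, for each $q,r$ the map $\mu_C\colon C_q/\d\wedge C_r/\d\to C_{q+r}/\d$ is homotopic to the Thom collapse map of the embedding $\delta_{qr}\colon C_{q+r}\hookrightarrow C_q\times C_r$ on the interior, hence---the open configuration spaces being framed, so that the relevant normal bundles are trivial---it is the Spanier--Whitehead dual $(\delta_{qr})_!$ of $\delta_{qr}$. Since Spanier--Whitehead duality interchanges the map induced in homology by one of a dual pair with the map induced in cohomology by the other, the coproduct $\mu_C^*$ in cohomology is the linear dual of the map $\delta_{qr}^*$ induced by $\delta_{qr}$ in cohomology, and the latter is precisely the product $\delta^*$ computed in Lemma \ref{proddeltaqr} (namely $\delta_{qr}^*(\omega_{ij}\otimes 1)=\omega_{ij}$, $\delta_{qr}^*(1\otimes\omega_{ij})=\omega_{q+i,q+j}$, extended cup-multiplicatively on monomials). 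Here I would invoke that each $H^*C_q$ is free abelian and finitely generated in each degree (Cohen's computation of $H^*C_q$, cf.\ Section \ref{BTsection}), which makes homology and cohomology of the spaces in sight honestly linearly dual and makes the K\"{u}nneth cross-product $H^*C_q\otimes H^*C_r\to H^*(C_q\times C_r)$ an isomorphism. To cope with the infinite disjoint union I would restrict, exactly as in the footnote preceding Lemma \ref{proddeltaqr}, to $\bigoplus_q H^*C_q\subset\prod_q H^*C_q\cong H^*(\coprod_q C_q)$, on which the identification with the cross-product on $\coprod_q C_q$ is unambiguous.

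For the homology statement I would then observe that, $\coprod_q(C_q/\d)$ being an $H$-space, $\mu_C$ induces a Pontryagin product $(\mu_C)_*$ on $H_*(\coprod_q(C_q/\d))$, which under the same finiteness is the linear dual of the coproduct $\mu_C^*$. Composing the two dualizations---$(\mu_C)_*$ dual to $\mu_C^*$, and $\mu_C^*$ dual to $\delta^*$---identifies $(\mu_C)_*$ with $\delta^*$; concretely, the identification of $H_*(C_q/\d)$ with $H^*C_q$ used here is the Poincar\'{e}--Lefschetz isomorphism $H^k C_q\cong\tH_{3q-k}(C_q/\d)$, which is what implements Spanier--Whitehead duality for the framed manifold-with-corners $C_q$.

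The proof is essentially bookkeeping, so I do not expect a serious obstacle; the one place that demands care is keeping the variances straight---the space-level $\delta$ is a ``coproduct'' map that induces a \emph{product} $\delta^*$ in cohomology, whereas $\mu_C$ is a \emph{product} that induces a \emph{coproduct} $\mu_C^*$, and Spanier--Whitehead duality swaps homology with cohomology---together with checking that the double-dual and cross-product identifications are legitimate, which is exactly why one stays inside $\bigoplus_q H^*C_q$ and uses the freeness and finite generation of $H^*C_q$.
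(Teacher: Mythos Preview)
Your proposal is correct and follows essentially the same route as the paper: the paper's proof is literally the one-line ``Combining Lemma \ref{mudualtodelta} and Lemma \ref{proddeltaqr} immediately gives'' (with a \qed), and you have simply unpacked that sentence---verifying the $H$-space axioms, invoking Spanier--Whitehead duality from Proposition \ref{mudualtodelta}, and reading off $\delta^*$ from Lemma \ref{proddeltaqr}. Your added care about freeness/finite generation and the restriction to $\bigoplus_q H^*C_q$ is exactly the bookkeeping the paper leaves implicit (or relegates to the footnote), so there is no substantive difference in approach.
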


%



\subsection{Product formula for evaluations of Bott-Taubes classes on connect-sums of knots}
\label{productformulasection}

Having computed the product on $\coprod_q (C_q/\d)$ in (co)homology, we still need to compute the product on $\coprod_{q,t} (C_{q,t}/\d)$ as the last step before deducing the promised product formula.  The following lemma is essentially an addendum to the previous section.  As one might expect, it just involves permuting certain indices.

\begin{lemma}\label{coprodCqt} 
The coproduct in cohomology induced by the multiplication $\mu_C: (C_{q,t}/\d) \x (C_{r,s}/\d) \to C_{q+t,r+s}/\d$ is dual to the product given by
\begin{align*}
\omega_{ij} \otimes 1 &\mapsto \omega_{\sigma(i) \sigma(j)} \\
1\otimes \omega_{ij} &\mapsto \omega_{\sigma(i+(q+t)) \sigma(j+(q+t))} 
\end{align*}
where $\sigma$ is the permutation of $\{1,...,q+t+r+s\}$ which shifts the $r$ letters to the left of the $t$ letters, as defined by (\ref{sigmadefn}).  Hence the product in homology induced by $\mu_C$ corresponds to the product defined above. 
\end{lemma}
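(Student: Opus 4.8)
The plan is to reduce Lemma \ref{coprodCqt} to the already-established Proposition \ref{coprodCq} (equivalently Lemmas \ref{mudualtodelta} and \ref{proddeltaqr}) by tracking how the permutation $\sigma$ affects the computation. Recall from section \ref{multconf} that $\mu_C$ on the $C_{q,t}$'s was \emph{defined} as the composition
\[
\xymatrix{
C_{q+t}/\d \x C_{r+s}/\d \ar[r]^-{\mu_C} & C_{q+t+r+s}/\d \ar[r]^-{\sigma} & C_{q+t+r+s}/\d,
}
\]
i.e. the ordinary connect-sum-style multiplication on $\coprod_q(C_q/\d)$ followed by the self-diffeomorphism of $C_{q+t+r+s}$ induced by $\sigma$ of (\ref{sigmadefn}). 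So the first step is simply to dualize this composition: in cohomology, $(\sigma \circ \mu_C)^* = \mu_C^* \circ \sigma^*$, so the coproduct we want is the coproduct $\mu_C^*$ of Proposition \ref{coprodCq} precomposed with the algebra automorphism $\sigma^*$ of $H^*(C_{q+t+r+s})$.

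Second, I would compute $\sigma^*$ on the generators. Since $\sigma$ is induced by a relabeling of configuration points, and $\phi_{ij} = \phi_{\sigma^{-1}(i)\sigma^{-1}(j)} \circ \sigma$ by the defining formula for the $\phi_{ij}$, pulling back the volume form gives $\sigma^*\omega_{ij} = \omega_{\sigma^{-1}(i)\sigma^{-1}(j)}$ (up to an orientation sign on $S^2$, which is $+1$ here since $\sigma$ just permutes coordinates and all classes are even-dimensional, so there are no Koszul signs either). Being careful about whether one wants $\sigma$ or $\sigma^{-1}$ here is the one genuine bookkeeping point; the permutation $\sigma$ in the statement of the lemma is exactly the one arising in (\ref{sigmadefn}) as the reordering of the target, so composing $\sigma^*$ (which introduces $\sigma^{-1}$ on indices) with the index-shift $\delta_{qr}$-formula of Lemma \ref{proddeltaqr} produces $\omega_{ij}\otimes 1 \mapsto \omega_{\sigma(i)\sigma(j)}$ and $1\otimes\omega_{ij}\mapsto \omega_{\sigma(i+(q+t))\sigma(j+(q+t))}$, after reconciling the two conventions. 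I would present this as: by Lemma \ref{proddeltaqr} the unpermuted coproduct sends $\omega_{ij}\otimes 1 \mapsto \omega_{ij}$ and $1\otimes\omega_{ij}\mapsto \omega_{i+(q+t),\,j+(q+t)}$, and then applying $\sigma$ to the indices (which is what appending $\sigma$ to the target multiplication does on duals) yields the displayed formulae.

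Third, having the formula on the algebra generators $\omega_{ij}$, I would note that since $\sigma^*$ is an algebra map and $\mu_C^*$ is (by Proposition \ref{coprodCq}) dual to the cup-product-respecting map $\delta^*$ of Lemma \ref{proddeltaqr}, the composite is again multiplicative, so the formula on monomials follows by the same computation as the last display in the proof of Lemma \ref{proddeltaqr}; and since every class in $H^*C_{q+t+r+s}$ is a sum of such monomials, this determines the coproduct completely. The final sentence of the lemma — that the homology product induced by $\mu_C$ is dual to this — is then immediate by dualizing, exactly as in Proposition \ref{coprodCq}.

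The main obstacle is purely notational rather than conceptual: keeping straight the direction of the permutation (pullback by $\sigma$ versus reindexing by $\sigma^{-1}$) and confirming that the ``$\sigma$'' appearing in the lemma's displayed formulae is consistent with the ``$\sigma$'' of (\ref{sigmadefn}) as it enters the \emph{target} of the multiplication. Once that is pinned down, the proof is a one-line reduction to the preceding lemma plus the observation that permuting configuration points acts on the $\omega_{ij}$ by permuting indices with no sign.
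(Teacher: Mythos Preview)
Your proposal is correct and follows essentially the same route as the paper: factor the multiplication on the $C_{q,t}$'s as the ordinary $\mu_C$ followed by the permutation $\sigma$, then reduce to Proposition~\ref{coprodCq}/Lemma~\ref{proddeltaqr} plus the elementary computation of how permuting configuration points acts on the generators $\omega_{ij}$. The paper phrases the reduction slightly more cleanly by passing immediately to the Spanier--Whitehead dual composition $\delta\circ\sigma^{-1}$ and computing $(\sigma^{-1})^*\omega_{ij}=\omega_{\sigma(i)\sigma(j)}$, which sidesteps the $\sigma$-versus-$\sigma^{-1}$ bookkeeping you flag; but the content is identical.
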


\begin{proof}
Recall that this product on the space level is the composition 
\[
\xymatrix{
C_{q+t}/\d \x C_{r+s}/\d \ar[r]^-{\mu_C} & C_{q+t+r+s}/\d \ar[r]^-\sigma & C_{q+t+r+s}/\d
}
\]
where the second map is the diffeomorphism induced by $\sigma$.  Thus it is Spanier-Whitehead dual to the composition
\[
\xymatrix{
C_{q+t+r+s} \ar[r]^{\sigma^{-1}} & C_{q+t+r+s} \ar[r]^-\delta &  C_{q+t} \x C_{r+s}
}
\]
where the first map is the diffeomorphism induced by the inverse permutation.  But we know $\delta^*$ from Proposition (\ref{coprodCq}), and in cohomology, the first map clearly sends $\omega_{ij}$ to $\omega_{\sigma(i)\sigma(j)}$.  This completely determines this dual product and hence our coproduct in cohomology.  The product in homology is of course dual to this coproduct and hence corresponds to the product $(\delta \circ \sigma^{-1})_*$.
\end{proof}

We are now ready to derive the product formula, which we restate for convenience:

\begin{T2}
Let $\beta \in H^*(C_q(\R^3)/\d C_q(\R^3))$ and $a_1,a_2 \in H_*\K$.  Let $\theta_i$ and $\eta_i$ be classes in $H^*(\coprod C_q(\R^3) /\d C_q(\R^3))$ such that $\mu_C^* \beta =\sum_i \theta_i \otimes \eta_i$.  Then
\[
\langle \tau^* ev^* \beta, \> \mu_*(a_1 \otimes a_2)\rangle = \sum_i \langle \tau^* ev^* \theta_i,\> a_1\rangle \cdot 
\langle \tau^* ev^* \eta_i, a_2 \rangle,
\]
where the cohomology can be taken with coefficients in any ring.
\end{T2}

\begin{proof}[Proof of Theorem \ref{productformula}, the product formula]
Recall that $ev$ induces a map $\bigvee_{q,t} (E_{q,t}/\d) \to \bigvee_{q,t} (C_{q,t}/\d)$.  Given $\beta\in H^*(C_q/\d)$, we want to evaluate  $\tau^* ev^* \beta$ on a homology class in $H_*\K$ coming from a connect-sum of knots (where we have omitted the Thom isomorphism and suspension isomorphism from the notation).  

We claim the diagram below commutes:
\begin{equation}
\xymatrix{
\tH^*(C_{q,t} \x C_{r,s}/\d)  \ar[d]_{(ev\x ev)^*} &
\tH^*(C_{q+t,r+s}/\d) \ar[d]^{ev^*} \ar[l]_-{\mu_C^*}\\
\tH^*(E_{q,t} \x E_{r,s}/\d) \ar[d]_{\mbox{Thom }\cong} &
\tH^*(E_{q+r,t+s}/\d) \ar[d]^{\mbox{Thom }\cong} \ar[l]_-{\mu_E^*} \\
\tH^*(E_{q,t}^\nu \wedge E_{r,s}^\nu/ \partial) \ar[d]_{(\tau \wedge \tau)^*}&
\tH^*(E_{q+r,t+s}^\nu/ \partial) \ar[l]_-{\mu_E^*} \ar[d]^{\tau^*}\\
\tH^*(\Sigma^\infty\K_+ \wedge \Sigma^\infty \K_+) & H^*(\Sigma^\infty \K_+) \ar[l]_-{\mu^*} }\label{first}
\end{equation}
The top square commutes because we defined the multiplications $\mu$ so that the maps commute on the space-level; the middle square does too, by naturality of the Thom isomorphism; and the bottom square does since we established that the multiplication commutes up to homotopy with the Thom collapse map.  (We also have commutativity of $\mu^*$ with the suspension isomorphisms, though we have omitted this last square from the diagram.) 

Let $a_1, a_2 \in H_*\K \cong H_*(\Sigma^\infty \K_+)$.  Below we let $\mu_*$ and $\mu^*$ denote compositions of the induced maps in (co)homology with the cross product or its inverse (cf. previous footnote).  Then
\begin{align*}
\langle \tau^* ev^* \beta, \> \mu_*(a_1\otimes a_2) \rangle &= 
\langle \mu^* \tau^* ev^* \beta, \>a_1 \otimes a_2\rangle \mbox{ by duality of coproduct $\mu^*$ to product $\mu_*$} \\
&= \langle \tau^* ev^* \mu_C^*\beta, \> a_1 \otimes a_2 \rangle \mbox{ by (\ref{first})}
\end{align*}
By Proposition (\ref{coprodCq}), given $\beta$ as a sum of products of $\omega_{ij}$'s, we can write $\mu_C^*\beta$ as a sum of tensors of cohomology classes, say $\sum_i \theta_i \otimes \eta_i$.  Then we have
\[
\langle \tau^* ev^* \beta,\> \mu_*(a_1\otimes a_2) \rangle = 
\sum_i \langle \tau^* ev^* \theta_i,\> a_1 \rangle \cdot \langle \tau^* ev^* \eta_i,\> a_2 \rangle,
\]
as desired.
\end{proof}

We emphasize that this formula holds for any coefficient ring.  Also, as with Theorem \ref{mapofringspectra}, Theorem \ref{productformula} is still true if $\K$ is replaced by $\Emb(\R,\R^n)$ for $n\geq 3$.  Roughly, the formula says that given a cohomology class coming from a configuration space, we can evaluate it on a connect-sum of knots by decomposing it into products of classes coming from configurations of fewer points, and then summing the products of the evaluations of those smaller classes on the respective knots.

\begin{example}
Let $\beta\in H^4(C_4/\d)$ be the class Poincar\'{e} dual to $(\omega_{12}\>\omega_{34})_4 \in C_4$ where the subscript 4 outside the parentheses indicates that this is an element of $C_4$.  Using similar notation, the elements which map to $(\omega_{12}\>\omega_{34})_4$ under $\delta^*$ are $1_0 \otimes (\omega_{12}\>\omega_{34})_4$, $(\omega_{12})_2 \otimes (\omega_{12})_2$, and $(\omega_{12}\>\omega_{34})_4\otimes 1_0$.  If we let $\overline{\alpha}$ denote the Poincar\'{e} dual of a class $\alpha$, the product formula gives 
\begin{align*}
\langle \tau^* ev^* \overline{(\omega_{12}\>\omega_{34})_4},\> \mu_*(a_1 \otimes a_2) \rangle =& 
\langle \tau^* ev^* \overline{1_0},\> a_1\rangle \cdot \langle \tau^* ev^* \overline{(\omega_{12}\>\omega_{34})_4 },\> a_2 \rangle + \\
& \langle \tau^* ev^* \overline{(\omega_{12})_2},\> a_1\rangle \cdot \langle \tau^* ev^* \overline{(\omega_{12})_2 },\> a_2 \rangle
+ \\
& \langle \tau^* ev^* \overline{(\omega_{12}\>\omega_{34})_4},\> a_1\rangle \cdot \langle \tau^* ev^* \overline{1_0},\> a_2 \rangle.
\end{align*}
\qed
\end{example}

\subsection{Bracket operation (or lack thereof)}
\label{bracket}
We might next try to find a similar formula for 
\[\langle \tau^* ev^* \beta,\> \xi_*([S^1]\otimes a_1 \otimes a_2)\rangle
\]
where 
\[
\xi: \C_2(2) \x \K \x \K \to \K
\]
is the structure map for the little 2-cubes action, and of course $\C_2(2)\simeq S^1$.  (With $\bQ$ coefficients $\xi_*([S^1]\otimes a_1 \otimes a_2)$ is the Browder operation, often denoted $[a_1, a_2]$ or $\lambda_1(a_1, a_2)$, and with $\bZ/2$ coefficients, $\xi_*([S^1]\otimes K \otimes K)$ is $Q_1(K)$, one of the Dyer-Lashof operations.)  The first obstruction to doing so is the following proposition concerning the failure of the multiplication on the total space of our bundle.  The main point is that we are considering \emph{ordered} configurations of points, which results in noncommutativity of the multiplications in homology.

\begin{proposition}\label{no2cubeslift}
Let $\mu_C$ be the multiplication on $\coprod_q (C_q/\d)$ as previously defined.  Suppose we have a multiplication $\mu_E$ on $\coprod_{q,t} (E_{q,t}/\d)$ compatible with $ev: E_{q,t}/\d \to C_{q,t}/\d$, i.e., 
\[
 \xymatrix{
\coprod_{q,t} (C_{q,t}/\d) \x \coprod_{q,t} (C_{q,t}/\d) \ar[r]^-{\mu_C} & \coprod_{q,t} (C_{q,t}/\d)  \\
\coprod_{q,t} (E_{q,t}/\d) \x \coprod_{q,t} (E_{q,t}/\d) \ar[r]^-{\mu_E} \ar[u]^{ev \x ev} & \coprod_{q,t} (E_{q,t}/\d) \ar[u]_{ev}
}
\]
commutes.  Then $\mu_E$ does not extend to a little 2-cubes action on $\coprod_{q,t} (E_{q,t}/\d)$.
\end{proposition}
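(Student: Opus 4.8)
The plan is to use that a little $2$-cubes action has a homotopy-commutative underlying multiplication, and then to detect the \emph{failure} of $\mu_E$ to be homotopy-commutative --- a failure forced by our working with \emph{ordered} configurations --- on one explicit odd-degree homology class pushed forward along $ev$; all (co)homology below is taken with $\Z$ coefficients. So suppose $\mu_E$ extended to a $\C_2$-action $\xi$ on $X:=\coprod_{q,t}(E_{q,t}/\partial)$, meaning that $\mu_E$ is homotopic to the restriction of the arity-$2$ structure map $\bar\xi\colon\C_2(2)\times X\times X\to X$ to a point $c_0\in\C_2(2)$. I will use this only with both inputs in the single component $E_{0,1}/\partial$ --- the piece with no point forced onto the knot and one free point --- so that $\mu_E$ and the opposite multiplication $\mu_E^{\mathrm{op}}(x,y):=\mu_E(y,x)$ are both maps $E_{0,1}/\partial\wedge E_{0,1}/\partial\to E_{0,2}/\partial$, and no bookkeeping about which component we land in is needed. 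Since $\C_2(2)\simeq S^1$ is path-connected, a path from $c_0$ to $c_0\cdot\sigma$ (where $\sigma\in\Sigma_2$ is the transposition), together with $\Sigma_2$-equivariance of $\bar\xi$, produces a homotopy $\mu_E\simeq\mu_E^{\mathrm{op}}$ between these two maps, hence $\mu_{E*}=\mu^{\mathrm{op}}_{E*}$ on homology.

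Next I produce the test class. Because $C_0(\R)$ and $C_0(\R\times D^2)$ are single points, $E_{0,1}$ is literally the product $\K\times C_1(\R^3)$ and $ev\colon E_{0,1}\to C_1(\R^3)$ is the projection. Now $C_1(\R^3)$ is diffeomorphic to a closed $3$-ball, so $C_1(\R^3)/\partial\cong S^3$ is orientable with $H_3\cong\Z$ generated by the fundamental class $[C_1(\R^3)/\partial]$. Let $\iota\colon C_1(\R^3)/\partial\hookrightarrow E_{0,1}/\partial$ be the inclusion of the fiber over the standard long knot $f_0$ (it carries boundary to boundary), so $ev\circ\iota=\mathrm{id}$, and set $a:=\iota_*[C_1(\R^3)/\partial]\in H_3(E_{0,1}/\partial)$, so that $ev_*a=[C_1(\R^3)/\partial]$ is a generator of $H_3(C_1(\R^3)/\partial)$.

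Finally I derive the contradiction. Since $|a|=3$ is odd, the swap map sends $a\otimes a$ to $(-1)^{3\cdot3}(a\otimes a)=-(a\otimes a)$, so $\mu^{\mathrm{op}}_{E*}(a\otimes a)=-\mu_{E*}(a\otimes a)$; combined with $\mu_{E*}=\mu^{\mathrm{op}}_{E*}$ from the first step this gives $2\,\mu_{E*}(a\otimes a)=0$ in $H_6(E_{0,2}/\partial)$. Applying $ev_*$ and using the compatibility $ev\circ\mu_E=\mu_C\circ(ev\times ev)$ together with $ev_*a=[C_1(\R^3)/\partial]$ gives $2\,(\mu_C)_*\bigl([C_1(\R^3)/\partial]\otimes[C_1(\R^3)/\partial]\bigr)=0$ in $H_6(C_2(\R^3)/\partial)$, where for $q=r=0$ this $\mu_C$ is just the multiplication on $\coprod_q C_q/\partial$. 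But Proposition \ref{coprodCq} --- equivalently Proposition \ref{mudualtodelta} together with the $m=n=0$ case of Lemma \ref{proddeltaqr}, in which the fundamental classes correspond to the units $1\in H^0$ and $\delta^*(1\otimes1)=1$ --- identifies $(\mu_C)_*([C_1(\R^3)/\partial]\otimes[C_1(\R^3)/\partial])$ with a generator $\pm[C_2(\R^3)/\partial]$ of $H_6(C_2(\R^3)/\partial)\cong\Z$, whose double is nonzero. This contradiction proves Proposition \ref{no2cubeslift}.

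The step I expect to be the main obstacle is the first one: extracting from the abstract $\C_2$-action the clean homotopy-commutativity statement $\mu_E\simeq\mu_E^{\mathrm{op}}$ for the single pair $(E_{0,1}/\partial,E_{0,1}/\partial)$, together with pinning down that the fiber of $E_{0,1}$ over $f_0$ really is $C_1(\R^3)$ with $ev$ the identity on it. Once one has an odd-degree class $a$ with $ev_*a$ a generator of $H_3(C_1(\R^3)/\partial;\Z)$, the Koszul sign in the last step does the rest; the same computation in fact works over any coefficient ring in which $2\neq0$.
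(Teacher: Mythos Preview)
Your proof is correct and follows the same overall strategy as the paper: a $\C_2$-action would force $(\mu_E)_*$ to be graded-commutative, and one detects the failure of this by pushing forward along $ev$ from a trivial-bundle component $E_{0,t}$ to $\coprod_q C_q/\partial$, where $(\mu_C)_*$ is visibly not graded-commutative.

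The only genuine difference is in \emph{how} non-commutativity of $(\mu_C)_*$ is witnessed. The paper simply cites the explicit formulas of Lemma~\ref{coprodCqt}, which show (for instance) that the products of $\omega_{12}\in H^2(C_2)$ and $1\in H^0(C_1)$ in the two orders land on the distinct even-degree classes $\omega_{12}$ and $\omega_{23}$ in $H^2(C_3)$; this works over any coefficient ring. You instead pick the odd-degree fundamental class $[C_1/\partial]$ and use the Koszul sign to force $2\,(\mu_C)_*([C_1/\partial]^{\otimes 2})=0$, contradicting that this product is a generator of $H_6(C_2/\partial)\cong\Z$. Your argument is more explicit and self-contained (you verify directly that $E_{0,1}\cong\K\times C_1(\R^3)$ and that $C_1(\R^3)/\partial\cong S^3$), but as you note it needs $2\neq 0$ in the coefficient ring; the paper's witness works over $\Z/2$ as well.
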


\begin{proof}
It suffices to show that $\mu_E$ is not commutative in homology.  The factors $E_{0,t}$ are trivial bundles over the knot space, and in that case $ev$ is just the projection onto the fiber, which is surjective in homology.  Now $(\mu_C)_*$ is not commutative, as can be seen immediately from Lemma \ref{coprodCqt}.  Combining these facts with the commutativity of the square above completes the proof.
\end{proof}

This Proposition relies on the multiplication we have defined on the quotients of configuration spaces.  We could still ask whether there is \emph{another} multiplication on configuration space and one on our total space (both compatible with each other and with connect-sum) which would allow for a calculation analogous to that in Theorem \ref{productformula}.  The following Proposition shows that \emph{any} compatibly defined multiplications on these spaces cannot give interesting bracket operations in homology.  It holds essentially for dimensional reasons.

\begin{proposition}
\label{bracketprop}
Suppose one is given any little 2-cubes actions on $(\coprod E_{q,t})/\partial$ and $(\coprod C_{q+t}(\R\x D^2))/\partial$ compatible with Budney's little 2-cubes action on $\K$ and the maps $\xymatrix{E_{q,t}/\partial \ar[r]^-{ev} & C_{q+t}(\R\x D^2).\partial}$ and $\xymatrix{ \Sigma^\infty \K_+ \ar[r]^-\tau & E_{q,t}^\nu}$.  Then  
\[\langle \tau^* ev^* \beta,\> \xi_*([S^1]\otimes a_1 \otimes a_2)\rangle=0
\]
where $\xi$ is the structure map for the little 2-cubes action on $\K$.
\end{proposition}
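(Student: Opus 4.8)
The plan is to transport the evaluation from $\K$ down to the configuration spaces using the hypothesized compatibilities, and there observe that the little $2$-cubes bracket is forced to vanish for degree/grading reasons — this is the ``dimensional'' content alluded to in the statement.

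First I would set up the reduction. By hypothesis $\tau$ and $ev$ intertwine the little $2$-cubes actions on $\K$, on $\coprod_{q,t}(E_{q,t}/\partial)$, and on $\coprod_m (C_m(\R^3)/\partial)$, so in homology (where the up-to-homotopy nature of the actions causes no trouble) the composite $ev_*\circ\tau_*\colon H_*(\K)\to H_*\!\left(\coprod_m C_m(\R^3)/\partial\right)$ is a map of $\C_2$-algebras; in particular it carries the Browder bracket $\xi_*([S^1]\otimes a_1\otimes a_2)$ to $\xi_{C*}([S^1]\otimes ev_*\tau_*a_1\otimes ev_*\tau_*a_2)$, where $\xi_C$ denotes the $2$-cubes structure map on $\coprod_m C_m/\partial$ applied to a pair. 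On the other hand, by its construction out of the spectrum map $\tau$ together with the suspension and relative Thom isomorphisms, the cohomology operation $\tau^*$ (integration along the fiber) is adjoint to the homology transfer $\tau_*$, and $ev^*$ is adjoint to $ev_*$ in the usual way. Putting these together I would rewrite
\[
\langle\,\tau^*ev^*\beta,\ \xi_*([S^1]\otimes a_1\otimes a_2)\,\rangle \;=\; \langle\,\beta,\ \xi_{C*}([S^1]\otimes ev_*\tau_*a_1\otimes ev_*\tau_*a_2)\,\rangle,
\]
reducing everything to understanding the bracket $\xi_{C*}$ on $\coprod_m C_m(\R^3)/\partial$.

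Next I would argue that this bracket is already zero in homology. Both classes $b_i:=ev_*\tau_*a_i$ lie in $H_*(C_q(\R^3)/\partial)$ for the single value of $q$ attached to $\beta$ (the fiber over which $\tau$ and $ev$ are taken). Since the $2$-cubes action on $\coprod_m C_m/\partial$ is compatible with $ev$ and with connect-sum — equivalently, it respects the grading by number of points, for which the empty configuration $*\in C_0$ is the unit — the bracket $\xi_{C*}([S^1]\otimes b_1\otimes b_2)$ lands in $H_*(C_{2q}(\R^3)/\partial)$. For $q\geq 1$ this is a different wedge summand from $C_q(\R^3)/\partial$, hence pairs trivially with $\beta\in H^*(C_q(\R^3)/\partial)$; for $q=0$ the bracket lives in homological degree $\geq 1$ while $H_*(C_0/\partial)=\widetilde H_*(S^0)$ is concentrated in degree $0$, so it again vanishes. (The same obstruction can be phrased purely by parity: Poincar\'e--Lefschetz duality gives $H_k(C_m(\R^3)/\partial)\cong H^{3m-k}(C_m(\R^3))$, and $H^*(C_m(\R^3))$ is generated by the degree-$2$ classes $\omega_{ij}$ \cite{Arnold, FCohen}, so $H_k(C_m(\R^3)/\partial)=0$ unless $k\equiv m\pmod 2$; the bracket raises homological degree by $1$ without changing $m$ mod $2$, hence must be $0$.) Either way $\xi_{C*}([S^1]\otimes b_1\otimes b_2)=0$, so the claimed evaluation is $0$.

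The step I expect to be the main obstacle is the first reduction rather than the vanishing: one has to make precise that the homotopy-level compatibility of $\tau$ and $ev$ with the little $2$-cubes actions really yields the displayed identity of bracket operations after passing to homology, and that $\tau^*$ is genuinely dual to $\tau_*$ under the relative Thom isomorphism — the ``ring spectrum in the homotopy sense'' bookkeeping of Section \ref{mult}, together with unwinding what ``compatible with $\tau$'' means for a map of spectra, is what makes this delicate. One also wants to note that any such $2$-cubes action on $\coprod_m C_m/\partial$ must preserve the number-of-points grading (so that the bracket lands where claimed). Once the degrees and wedge summands are pinned down, the vanishing itself is immediate.
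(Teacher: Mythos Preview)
Your parenthetical parity argument is correct and is exactly the paper's proof: $H^*(C_m(\R^3))$ is concentrated in even degrees, so by Poincar\'e--Lefschetz duality $H_k(C_m/\partial)$ vanishes unless $k\equiv m\pmod 2$; the bracket raises homological degree by $1$ while landing in the component with $m=m_1+m_2$ (same parity constraint as $k_1+k_2$), forcing it to vanish. The paper phrases this dually---$\xi_C^*\beta$ must lie entirely in $H^0(S^1)\otimes(\cdots)$, so pairing against $[S^1]$ gives zero---but it is the same observation.

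Your \emph{primary} ``different summand'' argument, however, contains a genuine error. In your displayed identity you apply the \emph{same} $\tau=\tau_{q,t}$ and $ev$ to $a_1$, to $a_2$, and to $\beta$. But that is not what compatibility of $\tau$ with the $\C_2$-actions says: as in the paper's diagram~(\ref{second}) and in Theorem~\ref{mapofringspectra}, compatibility relates $\tau_{q+r,\,t+s}$ on one side to $\tau_{q,t}\wedge\tau_{r,s}$ on the other. Dualizing correctly therefore produces a sum over decompositions $(q_1,t_1)+(q_2,t_2)=(q,t)$, with $b_i=ev_*\tau_{q_i,t_i,*}a_i\in H_*(C_{q_i+t_i}/\partial)$, and the bracket $\xi_{C*}([S^1]\otimes b_1\otimes b_2)$ then lands in $H_*(C_{q+t}/\partial)$---precisely the summand where $\beta$ lives. (This is the same bookkeeping as in Theorem~\ref{productformula}, where the $\theta_i,\eta_i$ live in various $C_{q'}/\partial$ with the $q'$ summing to $q$, not each equal to $q$.) So the summand mismatch you claim does not occur, and only the parity argument survives. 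Your closing worry is therefore misplaced: the reduction step is fine; it is your first vanishing argument that fails.
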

\begin{proof}
Suppose we have a diagram 
\begin{equation}
\xymatrix{
H^*(S^1)\otimes \tH^*(C_{q+t}\x C_{r+s}/\d)  \ar[d]_{\id \otimes (ev\x ev)^*} &
\tH^*(C_{q+t+r+s}/\d) \ar[d]^{ev^*} \ar[l]_-{\xi^*}\\
H^*(S^1) \otimes \tH^*(E_{q,t} \x E_{r,s}/\d) \ar[d]_{\id \>\otimes \mbox{ Thom }\cong} &
\tH^*(E_{q+r,t+s}/\d) \ar[d]^{\mbox{Thom }\cong} \ar[l]_-{\xi^*} \\
H^*(S^1) \otimes \tH^*(E_{q,t}^\nu/\partial) \otimes \tH^*(E_{r,s}^\nu/ \partial) \ar[d]_{\id \otimes (\tau \wedge \tau)^*}&
\tH^*(E_{q+r,t+s}^\nu/ \partial) \ar[l]_-{\xi^*} \ar[d]^{\tau^*}\\
H^*(S^1) \otimes \tH^*(\Sigma^\infty\K_+ \wedge \Sigma^\infty \K_+) & H^*(\Sigma^\infty \K_+) \ar[l]_-{\xi^*} }\label{second}
\end{equation}
Then 
\begin{equation}
\langle \tau^* ev^* \beta,\> \xi_*([S^1]\otimes a_1 \otimes a_2)\rangle =
\langle \xi^*\tau^* ev^* \beta,\> [S^1]\otimes a_1 \otimes a_2 \rangle =
 \langle \tau^* ev^* \xi^*\beta,\> [S^1]\otimes a_1 \otimes a_2 \rangle \label{bracketeqn}
\end{equation}
 But by the results of \cite{Arnold, FCohen} we know that the (co)homology of each $C_q$ is nonzero only in even dimensions.  Poincar\'{e} duality implies that the (co)homology of $C_q/\partial$ 
is nonzero only in odd or even dimensions, according as $q$ is odd or even.  It is then easily seen that the factor $\xi^*\beta$ in $H^*(S^1)$ must be entirely contained in $H^0(S^1)$, so the expression in the right-hand side of (\ref{bracketeqn}) must be zero.  We emphasize that this argument made no assumptions about how we lifted the space-level connect-sum to the total space.
\end{proof}





\bibliographystyle{plain}
\bibliography{refs}

\end{document}